\numberwithin{equation}{section}
\newcommand {\mo} {\,{\rm mod}\,}
\newcommand{\diam}{{\rm diam}\,}
\newcommand{\dist}{{\rm dist}\,}
\newcommand{\Int}{{\rm Int}\,}
\newcommand {\Sn} {{\overline{\mathbb R}^n}}
\newcommand {\Hn} {{\mathbb H^n}}
\newcommand {\ring} {{\mathcal R}}
\newcommand {\T} {{\mathcal T}}
\newcommand {\es} {{\mathcal S}}
\newcommand {\R} {\mathbb R}
\newcommand {\B} {\mathbb B}
\newcommand{\M}{\mathsf{M}}
\newcommand {\A} {\mathcal A}
\newcommand {\area} {A}
\newcommand {\aand} {\quad\text{and}\quad}
\newcommand {\Sp} {{\mathbb{S}^{n-1}_+}}
\theoremstyle{theorem}
\numberwithin{equation}{section}
\newenvironment{thm}[1][]{%
\refstepcounter{equation}%
\medskip%
\noindent%
\arabic{section}.\arabic{equation} {\bf Theorem}%
\ifthenelse{\equal{#1}{}}{}{ (#1)}%
{\bf .}
\itshape}{\medskip}
\newenvironment{lem}[1][]{%
\refstepcounter{equation}%
\medskip%
\noindent%
\arabic{section}.\arabic{equation} {\bf Lemma}%
\ifthenelse{\equal{#1}{}}{}{ (#1)}%
{\bf .}
\itshape}{\medskip}
\newenvironment{prop}[1][]{%
\refstepcounter{equation}%
\medskip%
\noindent%
\arabic{section}.\arabic{equation} {\bf Proposition}%
\ifthenelse{\equal{#1}{}}{}{ (#1)}%
{\bf .}
\itshape}{\medskip}
\newenvironment{rem}[1][]{%
\refstepcounter{equation}%
\medskip%
\noindent%
\arabic{section}.\arabic{equation} {\bf Remark}%
\ifthenelse{\equal{#1}{}}{}{ (#1)}%
{\bf .}
}{\medskip}
\renewcommand{\subsection}[1]%
{\smallskip \noindent \refstepcounter{equation}\arabic{section}.\arabic{equation} {\bf #1.}}
\begin{document}
\title[Modulus estimates of semirings]{Modulus estimates of semirings with applications \\ to boundary extension problems}
\subjclass[2020]{Primary: 30C65; Secondary: 26B35, 30C75, 31B15}
\keywords{Teichm\"uller ring, modulus of a ring, boundary behavior, directional dilatation}

\author[A. Golberg]{Anatoly Golberg}
\address{School of Mathematical Sciences \\
Holon Institute of Technology \\
52 Golomb St., P.O.B. 305, Holon 5810201, Israel \\
ORCID 0009-0002-8785-7463
}
\email{golberga@hit.ac.il}

\author[T. Sugawa]{Toshiyuki Sugawa}
\address{Graduate School of Information Sciences \\
Tohoku University \\
Aoba-ku, Sendai 980-8579, Japan \\
ORCID 0000-0002-3429-5498
}

\email{sugawa@math.is.tohoku.ac.jp}

\author[M. Vuorinen]{Matti Vuorinen}
\address{Department of Mathematics and Statistics, University of Turku, FI-20014 Turku, Finland \\
ORCID 0000-0002-1734-8228
}
\email{vuorinen@utu.fi}

\begin{abstract}
In our previous paper \cite{GSV20}, we proved that the complementary components of a ring domain in $\mathbb{R}^n$ with large enough modulus may be separated by an annular ring domain and applied this result to  boundary correspondence problems under quasiconformal mappings. In the present paper, we continue this work and investigate boundary extension problems for a larger class of mappings.
\end{abstract}
\maketitle


{\small{\em{\centerline{Dedicated to the memory of Professor Lawrence Zalcman}}}}

\medskip

\bigskip

\section{Introduction}

Extremal problems of geometric function theory often lead to situations where the extremal configurations exhibit symmetry. Two classical examples of such extremal configurations are the ring domains of Gr\"otzsch and Teichm\"uller which provide lower bounds for the conformal capacities of the respective two classes of ring domains and have found many important applications in the theory of quasiconformal and quasiregular mappings in $\mathbb{R}^n, n\ge 2$ (\cite{GMP17}, \cite{HKV20}). Systematic
study of the capacities of these ring domains is carried out in \cite{AVV97}. In the planar case, the Teichm\"uller ring serves as an extremal case for the so-called Teichm\"uller theorem on the existence of an annular ring which separates the boundary components of a general ring domain. It seems, however, that the higher dimensional analogues of Teichm\"uller's theorem are less known.

In our previous paper \cite{GSV20}, we have extended Teichm\"uller's theorem and its semiring counterpart
to higher dimensions and, as examples of applications, given a
conformally invariant characterization of uniformly perfect sets in $\Sn.$
The following theorem \cite[Theorem~3.2]{GSV20} extends a variant of Teichm\"uller's theorem due to Avkhadiev and Wirths \cite{AW09}
to the $n$-dimensional case.

\begin{thm}\label{thm:teich}
Let $n\ge 2.$
Every ring domain $\mathcal R$ separating a given point $x_0$ in $\mathbb R^n$ and $\infty$
with $\mo \mathcal R>A_n$ contains an annular ring $\mathcal A$ centered at $x_0$
with $\mo \A\ge \mo \ring - A_n.$
Here $A_n$ is the constant defined in \eqref{eq:An} below and
this constant $A_n$ is sharp.
\end{thm}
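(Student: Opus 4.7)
\medskip\noindent\emph{Proof plan.}
After a translation I may assume $x_0=0$. Let $E_0,E_1$ denote the bounded and unbounded complementary continua of $\mathcal R$ (containing $0$ and $\infty$ respectively), and set
\[
r_0=\max\{|y|:y\in E_0\},\qquad r_1=\min\{|y|:y\in E_1\}.
\]
If $r_0<r_1$, the open concentric annulus $\mathcal A=\{y:r_0<|y|<r_1\}$ is automatically centered at $x_0$, contained in $\mathcal R$, and has modulus $\log(r_1/r_0)$. So the plan is to bound $\mo\mathcal R-\log(r_1/r_0)$ uniformly by $A_n$ in the regime $r_0<r_1$, and to rule out the degenerate configuration $r_0\ge r_1$ a posteriori using the modulus hypothesis.

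The engine is the sharp $n$-dimensional Teichm\"uller extremal inequality: whenever a ring $\mathcal R$ has complementary continua meeting the pairs $\{0,a\}$ and $\{b,\infty\}$ with $|b|>|a|$, one has
\[
\mo\mathcal R\le\tau_n\bigl(|b|/|a|\bigr),
\]
where $\tau_n(\cdot)$ denotes the modulus of the standard Teichm\"uller ring in $\R^n$. This inequality is proved by a spherical symmetrization of $E_0,E_1$ about a common radial axis through $0$ passing through suitably chosen points of $E_0$ and $E_1$: symmetrization does not decrease capacity, and the symmetrized ring is M\"obius-equivalent to a canonical Teichm\"uller ring whose associated radii are $|a|,|b|$; see \cite[Ch.~8]{AVV97} or \cite[Ch.~7]{GMP17} for the classical formulations. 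Applied to our $\mathcal R$ with the choices $a\in E_0$, $|a|=r_0$, and $b\in E_1$, $|b|=r_1$, this yields $\mo\mathcal R\le\tau_n(r_1/r_0)$.

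With the constant taken to be
\[
A_n=\sup_{s>1}\bigl(\tau_n(s)-\log s\bigr),
\]
the classical asymptotics $\tau_n(s)\to 0$ as $s\to 1^+$ and $\tau_n(s)=\log s+O(1)$ as $s\to\infty$ guarantee that this supremum is finite and attained at some $s_\ast>1$. Feeding $s=r_1/r_0$ into the extremal inequality then delivers $\mo\mathcal R\le\log(r_1/r_0)+A_n=\mo\mathcal A+A_n$, which is the required bound. For the case $r_0\ge r_1$, a M\"obius inversion centered at $b$ interchanges the relative radial positions of the continua and an application of the extremal inequality produces $\mo\mathcal R\le A_n$, contradicting the hypothesis $\mo\mathcal R>A_n$; so this case does not arise. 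Sharpness is seen by taking $\mathcal R$ to be a suitable rigid motion of the Teichm\"uller ring corresponding to the maximizer $s_\ast$, where equality holds throughout.

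The principal obstacle I anticipate is the clean passage from the spherically symmetrized configuration to a canonical Teichm\"uller ring in precisely the form above: after symmetrization about the axis through $a$, the continuum $E_1$ is radialized but its closest point to the origin may lie on either side of the axis, and a further M\"obius self-map of $\Sn$ fixing $\{0,\infty\}$ is needed to bring everything into canonical position without altering $r_0$ and $r_1$. A secondary subtlety is verifying that the supremum defining $A_n$ is actually attained (not merely finite), which is what makes the constant in the theorem sharp in the strong sense claimed.
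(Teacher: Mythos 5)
Your argument is essentially the proof of \cite[Theorem~3.2]{GSV20}, which the present paper only quotes: with $x_0=0$, set $r_0=\max_{y\in E_0}|y|$ and $r_1=\min_{y\in E_1\setminus\{\infty\}}|y|$, apply the symmetrization (Teichm\"uller) extremal inequality to obtain $\mo\mathcal R\le \mo R_{T,n}(r_1/r_0)$, and absorb the excess into $A_n$; the annulus $r_0<|y|<r_1$ then does the job. Three small repairs are needed. (i) In this paper $\tau_n$ denotes the curve-family modulus $\M(\Gamma_{R_{T,n}(t)})$, which is \emph{decreasing}; the quantity you want is $\mo R_{T,n}(t)=\log\Psi_n(t)$, so your key inequality should be written $\mo\mathcal R\le\mo R_{T,n}(|b|/|a|)$. (ii) The supremum defining $A_n$ need not be attained at an interior point: for $n=2$ one has $A_2=\pi=\lim_{t\to1^+}\left[\mo R_{T,2}(t)-\log t\right]$, so there is no maximizer $s_*>1$. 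This is harmless for the containment statement (only the upper-bound property of the supremum is used), but sharpness must be argued with a sequence $t_k$ along which $\mo R_{T,n}(t_k)-\log t_k\to A_n$, the extremal configuration being the Teichm\"uller rings $R_{T,n}(t_k)$ themselves, whose largest concentric annulus has modulus exactly $\log t_k$. (iii) The degenerate case $r_1\le r_0$ needs no inversion: the same extremal inequality with $t=r_1/r_0\le1$, together with the monotonicity and continuity of $t\mapsto\mo R_{T,n}(t)$, gives $\mo\mathcal R\le\mo R_{T,n}(1)\le A_n$, contradicting the hypothesis. With these adjustments the proposal is correct and coincides with the cited proof.
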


Some other necessary results can be found in Section~\ref{Auxres}.
For their proofs we refer to \cite{GSV20}.

\medskip
In this paper, we emphasize that our approach allows us to weaken the regularity or quasiconformality assumptions
of the mappings. By definition every quasiconformal/quasiregular mapping $f:G\to\mathbb R^n$ of a domain $G\subset\mathbb R^n$ belongs to the Sobolev class $W^{1,n}_{\rm loc}(G).$ Moreover, $f$ is differentiable almost everywhere (a.e.) and possesses the Lusin $(N)$-property (preservation of zero measure sets) in $G.$ Arbitrary homeomorphisms of Sobolev class $W^{1,p}_{\rm loc}$ are differentiable a.e. only for $p>n-1,$ and the Lusin $(N)$-property holds for $p\ge n.$ For some details, see Section~\ref{Quasicon}.

Here we consider homeomorphisms of finite directional dilatations of the borderline class $W^{1,n-1}_{\rm loc}(G).$
Our technique involves various modulus bounds for semirings which rely on Theorem~\ref{thm:teich}. These results are presented in Section~\ref{Direc}.
In Section~\ref{Boundext} we establish various sufficient conditions on boundary extensions of the mappings considered, obtaining results which guarantee Lipschitz or weak H\"older type continuity of the extended mapping.


\section{Gr\"otzsch and Teichm\"uller rings and related estimates}\label{GroTeirings}

In this section we present some necessary notions connected to the conformal modulus of a curve/surface family and to the moduli of two distinguished rings named after Gr\"otzsch and Teichm\"uller in $\Sn,$ $n \ge 2.$ Here $\Sn$ denotes the extended Euclidean $n$-space $\R^n\cup\{\infty\},$
which is homeomorphic to the $n$-sphere $\mathbb{S}^n.$

\subsection{Modulus of curve/surface family}
Following \cite[9.2]{MRSY} (cf. \cite{Fug57}), we recall the notion of the modulus of a $k$-dimensional surface family (a curve family for $k=1$). Let $\omega$ be an open set in $\overline{\mathbb R}^k,$ $k=1,\ldots,n-1.$
A continuous mapping $S:\omega\to \mathbb R^n$ is called a $k$-dimensional surface $S$ in $\mathbb R^n.$
When $k=n-1,$ it is also called a hypersurface.
The number of preimages of a point $y,$ i.e. $N(S,y)={\rm card}\{x\in\omega\,:\, S(x)=y\}$ is said to be the \textit{multiplicity function} of $S$ at $y\in\mathbb R^n.$

By a $k$-dimensional Hausdorff area in $\mathbb R^n$ associated with a surface $S:\omega\to\mathbb R^n,$ we mean
\begin{equation*}
\area_S(B)\,=\,\area_S^k(B):=\,\int\limits_B N(S,y)\, d\mathcal H^k(y)
\end{equation*}
for every Borel set $B\subseteq\mathbb R^n.$
Here, $\mathcal H^k$ denotes the $k$-dimensional Hausdorff measure in $\mathbb R^n$
so normalized that $\mathcal H^k(I^k)=1,$ where $I^k=[0,1]^k\times\{0\}^{n-k}$ is the $k$-dimensional unit cube
embedded in $\R^n.$
The surface $S$ is called \textit{rectifiable}
if $\area_S(\mathbb R^n)<\infty.$

For a Borel function $g:\mathbb R^n\to[0,\infty],$ its integral over $S$ is defined by
\begin{equation*}
\int\limits_S g\,d\area^k\,:=\,\int\limits_{\mathbb R^n} g(y)N(S,y)\, d\mathcal H^k(y).
\end{equation*}

Let $\Gamma=\Gamma^k$ be a family of $k$-dimensional surfaces $S.$
A Borel measurable function $\varrho:\R^n\to [0,\infty]$ is called \textit{admissible} for $\Gamma^k$
if
\begin{equation*}
\int\limits_S \varrho^k\,d\area^k\, \ge\, 1
\end{equation*}
for every $S\in\Gamma^k.$
The (conformal) modulus of $\Gamma$ is defined to be
$$
\M(\Gamma)\,=\,\inf_\varrho \int_{\R^n}\varrho(x)^n\,dm_n(x)\,,
$$
where the infimum is taken over all admissible functions $\varrho$ on $\R^n$ for $\Gamma,$
and $m_n$ is the Lebesgue measure on $\R^n.$
If a property holds for all $S\in\Gamma\setminus \Gamma_0$ for some subfamily $\Gamma_0$
of $\Gamma$ with $\M(\Gamma_0)=0,$ we will say that the property holds for almost every $S\in\Gamma.$

\subsection{Rings}
Throughout our paper, a continuum will mean a connected, compact and non-empty set.
A continuum is said to be non-degenerate if it contains more than one point.
A continuum $C\subsetneq\Sn$ is called {\it filled} if $\Sn\setminus C$ is connected.
For a pair of disjoint filled continua $C_0$ and $C_1$ in $\Sn,$
the set $\ring=\Sn\setminus(C_0\cup C_1)$ is open and connected and
will be called a ring domain or, simply, a {\it ring} and sometimes denoted by $\ring(C_0, C_1).$
The ring $\ring$ is said to have non-degenerate boundary if each component $C_j$
is a non-degenerate continuum.
We will say that $\ring(C_0,C_1)$ separates a set $E$ if $\ring\cap E=\varnothing$ and if
$C_j\cap E\ne\varnothing$ for $j=0,1.$
In the sequel, when $\ring\subset\R^n,$
we will assume conventionally that $\infty\in C_1$ unless otherwise stated.

Let $\Gamma_\mathcal R$ be the family of all curves joining $C_0$ and $C_1$ in $\mathcal R.$
Also, let $\Sigma_\mathcal R$ be the family of hypersurfaces $S$ in $\mathcal R$ separating
the boundary of $\mathcal R.$
These are dual to each other in the sense that the relation
$\M(\Gamma_\mathcal R)=\M(\Sigma_\mathcal R)^{1-n}$ holds
(see \cite{Gol10}).
Then the modulus (called also the module) of  $\mathcal R$ is defined by
\begin{equation*}
\mo \mathcal R=\left[\frac{\omega_{n-1}}{\M(\Gamma_\mathcal R)}\right]^{1/(n-1)}
=\omega_n^{1/(n-1)}\M(\Sigma_\mathcal R) \,,
\end{equation*}
where $\omega_{n-1}$ denotes the area of the $(n-1)$-dimensional unit sphere \cite[p.~IX]{Vai71}.

For the annular (spherical) ring $\A(a; r_0,r_1)=\{x\in\mathbb R^n\,:\,r_0<|x-a|<r_1\},$
we have $\mo \A(a;r_0,r_1)=\log(r_1/r_0)$ (see, e.g. \cite[pp. 22-23]{Vai71}).

A ring $\ring'$ is said to be a {\it subring} of a ring $\ring$ if $\ring'\subset\ring$ and if
$\ring'$ separates $\Sn\setminus\ring.$
By the monotonicity of the moduli of curve families, we have the inequality $\mo \ring'\le \mo\ring$
in this case.

\subsection{Gr\"otzsch and Teichm\"uller rings}
Two canonical rings are of special interest because of
the extremal features of their moduli.
The first one is called the Gr\"otzsch ring $R_{G,n}(s),$ $s>1,$ and defined by
\begin{equation*}
R_{G,n}(s)=\mathcal R(\overline{\mathbb B}^n, [se_1,\infty]).
\end{equation*}
Here and hereafter $\mathbb B^n$ denotes the unit ball centered at the origin,
$\overline{\mathbb B}^n$ is its closure, $e_1$ is the unit vector $(1,0,\ldots,0)$
in $\R^n$ and $[se_1,\infty]=\{te_1: s\le t<\infty\}\cup\{\infty\}.$
The second one is the Teichm\"uller ring $R_{T,n}(t),$ $t>0,$ and defined by
\begin{equation*}
R_{T,n}(t)=\mathcal R([-e_1,0], [te_1,\infty]).
\end{equation*}
The functions $\gamma_n(s)=\M(\Gamma_{R_{G,n}(s)})$ and
$\tau_n(t)=\M(\Gamma_{R_{T,n}(t)})$ are systematically studied in \cite{AVV97}.

Here we briefly recall the main properties of the moduli of the Gr\"otzsch and Teichm\"uller rings, see, e.g. \cite{AVV97}, \cite[5.4.1, pp.~181-182]{GMP17}, \cite[pp.~157-159]{HKV20}.

\begin{itemize}
\item Both $\gamma_n$ and $\tau_n$ are strictly decreasing and continuous functions.
\item Let $\ring$ be the ring $\ring(\overline\B^n,C_1)$
for a filled continuum $C_1$ with $y, \infty\in C_1$ in the domain $|x|>1.$
Then $\mo\ring\le\mo R_{G,n}(|y|).$
\item For filled continua $C_0, C_1$ with $0, -e_1\in C_0$ and $x_1,\infty\in C_1,$ $\mo \mathcal R(C_0,C_1)\le \mo R_{T,n}(|x_1|).$
\item The following functional identity holds for $t>0,$
\begin{equation*}
\mo R_{T,n}(t)\,=\,2 \mo R_{G,n}(s),\qquad s=\sqrt{t+1}\,.
\end{equation*}
\end{itemize}

To define two important constants, we make use of two real-valued functions $\Phi_n$ and $\Psi_n$ defined by
\begin{align*}
\log\Phi_n(s)&=\mo R_{G,n}(s)=\left[\frac{\omega_{n-1}}{\gamma_n(s)}\right]^{1/(n-1)}, \\
\log\Psi_n(t)&=\mo R_{T,n}(t)=\left[\frac{\omega_{n-1}}{\tau_n(t)}\right]^{1/(n-1)}\,.
\end{align*}

The Gr\"otzsch (ring) constant $\lambda_n,$ defined by
\begin{equation*}
\lambda_n\,:=\, \lim\limits_{s\to \infty} \Phi_n(s)/s\,,
\end{equation*}
admits the following bounds
\begin{equation*}
4\,\le\,\lambda_n \,\le\, 2^{n/(n-1)}e^{n(n-2)/(n-1)}
\end{equation*}
and has numerous applications to various fields of Real and Complex Analysis.
Note that $\lambda_2=4$ and the exact value of $\lambda_n$ is unknown for $n\ge 3;$ see \cite{AVV97, GMP17}.

\medskip
The quantity $A_n$ mentioned in Theorem~\ref{thm:teich} is defined by
\begin{equation}\label{eq:An}
A_n\,=\,\sup_{1<t<+\infty}\big[\mo R_{T,n}(t)-\log t\big]
\,=\,\sup_{1<t<+\infty}\log\frac{\Psi_n(t)}{t}\,.
\end{equation}
Moreover, the number $A_n$ admits the estimate (see \cite[Theorem 3.2]{GSV20}):
\begin{equation*}
A_n\le
2\log\frac{(1+\sqrt 2)\lambda_n}2
=\log \frac{(3+2\sqrt{2})\lambda_n^2}{4}.
\end{equation*}
When $n=2,$ it is known that $A_2=\pi.$


\section{Auxiliary results}\label{Auxres}

In Introduction we have formulated the multidimensional counterpart of the Teichm\"uller theorem. This theorem is also crucial for the present paper and its proof  can be found in our previous paper \cite{GSV20}. We also apply  the following results of the same manuscript and provide them here for convenience of the reader.

\subsection{Semirings}
Following our previous paper \cite{GSV20}, the standard model for ``semiring" is the upper half of the {\it closed} ring
$$
\T_R\,=\,\{x\in\Hn: 1\le |x|\le R\}
$$
for $1<R<+\infty.$
Here and hereafter $\Hn$ denotes the upper half space $\{x=(x_1,\dots,x_n): x_n>0\}.$
The semiring $\T_R$ has two distinguished boundary components
\begin{equation*}
\partial_0\T_R=\{x\in\Hn: |x|=1\}\qquad \text{and}\qquad
\partial_1\T_R=\{x\in\Hn: |x|=R\}
\end{equation*}
relative to  $\Hn\,,$ which are homeomorphic to
the $(n-1)$-dimensional open ball $\B^{n-1}.$
Let $\Gamma(R)$ denote the family of arcs (curves) $\gamma:[0,1]\to \T_R$
joining $\partial_0\T_R$ and $\partial_1\T_R$ in $\T_R.$
Thanks to \cite[7.7]{Vai71}, we obtain the formula
\begin{equation}\label{eq:modulus}
\M(\Gamma(R))=\frac{\omega_{n-1}}{2}\left(\log R\right)^{1-n}.
\end{equation}
Let $\Sigma(R)$ denote the family of surfaces $S:\B^{n-1}\to\T_R$ which are
proper maps and the images separate $\partial_0T_R$ from $\partial_1T_R$ in $\T_R.$
By the symmetry principle, we also have
$\M(\Sigma(R))=(\omega_{n-1}/2)^{1/(1-n)}\log R.$

A subset $\es$ of $\Sn$ is called a {\it semiring} if it is homeomorphic to $\T_R$ for some $R>1.$
We denote by $\Gamma_\es$ the family of the image curves of $\Gamma(R)$
under a homeomorphism $f:\T_R\to \es.$
In other words, $\Gamma_\es$ consists of curves joining the distinguished boundaries
 $\partial_0\es=f(\partial_0\T_R)$ and $\partial_1\es=f(\partial_1\T_R)$ in $\es.$
Similarly, we denote by $\Sigma_\es$ the image surfaces of $\Sigma(R)$ under $f:\T_R\to\es.$
Note that $\M(\Gamma_\es)$ and $\M(\Sigma_\es)$ do not change under conformal transformations.
Moreover, as in the case of rings, by \cite[Thm~3.13]{Ziem67},
we have the relation
$$
\M(\Sigma_S)=\left[\M(\Gamma_S)\right]^{1/(1-n)}.
$$
We now define the modulus of the semiring $\es$ by
\begin{equation}\label{eq:Ziem}
\mo \es=\left[\frac{\omega_{n-1}}{2\M(\Gamma_\es)}\right]^{1/(n-1)}
=\left(\frac{\omega_{n-1}}{2}\right)^{1/(n-1)}\M(\Sigma_\es).
\end{equation}
In particular, $\mo\T_R=\log R$ by virtue of \eqref{eq:modulus}.

\subsection{Properly embedded semirings}
Let $G$ be a proper subdomain of $\Sn.$
A semiring $\es$ in $G$ is said to be {\it properly embedded} in $G$ if
$\es\cap C$ is compact whenever $C$ is a compact subset of $G.$
That is to say, $\es$ is a properly embedded semiring in $G$ if and only if
some (and hence every) homeomorphism $f:\T_R\to \es$ is proper as considered to be
a map $f:\T_R\to G.$
Note that $\partial_0\es$ and $\partial_1\es$
are properly embedded $(n-1)$-dimensional open balls in $G.$
(Though there is no canonical way to label $\partial_0\es$ and $\partial_1\es$
to the connected components of $\partial\es$ in $G,$ we take the labels given by
a proper embedding $f:\T_R\to G$ and fix them for convenience.)

From now on, we consider a semiring $\es$ properly embedded in $\B^n$ by a mapping
$f:\T_R\to\es\subset\B^n.$
Then $\B^n\setminus\es$ is an open subset of $\B^n$ consisting of two components $V_0$ and $V_1$
for which $V_0\cap\partial_1\es=\varnothing$ and $V_1\cap\partial_0\es=\varnothing.$
The following separation lemma \cite[Lemma~4.3]{GSV20} will be applied in the last section of our paper.

\begin{lem}\label{lem:sep}
Let $\es$ be a properly embedded semiring in $\B^n.$
Then $\mo \es>0$ if and only if the Euclidean distance $\delta=\dist(V_0, V_1)$ between
$V_0$ and $V_1$ is positive.
Moreover, in this case, the double $\hat\es:=\Int\es\cup U \cup \Int\es^*$ of $\es$
is a ring with
$\mo\hat\es=\mo\es,$ where $\es^*$ is the reflection of $\es$ in $\partial\B^n$
and $U=\partial\B^n\setminus(\overline V_0\cup\overline V_1).$
\end{lem}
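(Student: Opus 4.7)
The plan is to first prove the ``moreover'' clause under the hypothesis $\delta>0$ (which immediately gives the direction $\delta>0\Rightarrow\mo\es>0$), and then treat the converse by a short-curve argument. I would begin by assembling $\hat\es$ and verifying it is a ring. Since $\dist(V_0,V_1)=\dist(\overline V_0,\overline V_1)$, the assumption $\delta>0$ makes $\overline V_0$ and $\overline V_1$ disjoint compact subsets of $\overline{\B^n}$; in particular their traces $\overline V_j\cap\partial\B^n$ on the sphere are disjoint. With $C_0:=\overline V_0\cup\overline{V_0^*}$ and $C_1:=\overline V_1\cup\overline{V_1^*}\cup\{\infty\}$ (with $\infty$ attached to whichever side becomes unbounded under the inversion $\phi$ in $\partial\B^n$), each $C_j$ is a continuum connected across $\partial\B^n$ through its trace $\overline V_j\cap\partial\B^n$, with connected complement (since $V_j\cup V_j^*$ is connected). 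From the partition $\partial\B^n=(\overline V_0\cap\partial\B^n)\sqcup U\sqcup(\overline V_1\cap\partial\B^n)$ one reads off $\Sn\setminus(C_0\cup C_1)=\Int\es\cup U\cup\Int\es^*=\hat\es$, identifying $\hat\es=\ring(C_0,C_1)$.

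The next task is the modulus identity $\mo\hat\es=\mo\es$ via symmetrization. Comparing the ring and semiring definitions reduces this to $\M(\Gamma_{\hat\es})=2\M(\Gamma_\es)$. For the inequality $\M(\Gamma_{\hat\es})\le 2\M(\Gamma_\es)$, I would extend any admissible $\rho$ for $\Gamma_\es$ to $\R^n$ by conformal pullback under $\phi$ on $\es^*$; the conformal invariance of $\int\rho^n\,dm_n$ under inversion gives $\int_{\es^*}\tilde\rho^n=\int_\es\rho^n$, hence $\int\tilde\rho^n\,dm_n=2\int\rho^n\,dm_n$, and a curve-splitting argument shows $\tilde\rho$ is admissible for $\Gamma_{\hat\es}$ (a curve crossing $\partial\B^n$ is handled by reflecting its $\es^*$-portion back into $\es$ to obtain a comparison curve of equal $\tilde\rho$-length joining $\partial_0\es$ to $\partial_1\es$). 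The reverse inequality follows by symmetrizing any admissible $\rho$ on $\Gamma_{\hat\es}$ via averaging with its conformal reflection: the symmetrization remains admissible with non-increased $L^n$ norm by convexity, and its restriction to $\es$ is admissible for $\Gamma_\es$. Since $\hat\es$ is a ring with non-degenerate complementary continua, $\mo\hat\es>0$, hence $\mo\es>0$.

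For the converse $\delta=0\Rightarrow\mo\es=0$, I would construct short curves. Pick $p_k\in V_0$ and $q_k\in V_1$ with $|p_k-q_k|\to 0$; by compactness they admit a common subsequential limit $p$. Because $V_0,V_1$ are open and separated in $\B^n$ by the disjoint components $\partial_0\es\cup\partial_1\es$, necessarily $p\in\partial\B^n$. The segment $[p_k,q_k]\subset\B^n$ (by convexity) crosses $\es$, entering through $\partial_0\es$ at some $a_k\to p$ and leaving through $\partial_1\es$ at some $b_k\to p$; hence $p\in\overline{\partial_0\es}\cap\overline{\partial_1\es}$, and the corresponding sub-arcs in $\es$ provide a sequence in $\Gamma_\es$ of lengths tending to zero, concentrated near $p$. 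I expect the main obstacle to be the rigorous passage from here to $\M(\Gamma_\es)=\infty$: the ``short curves imply infinite modulus'' principle is classical but requires justification. The cleanest route is through the doubled picture, where the same $p$ lies in $C_0\cap C_1$, so the doubled ``ring'' degenerates and $\mo\hat\es=0$ by the standard characterization of vanishing ring modulus by touching complementary continua; the monotonicity $\mo\es\le\mo\hat\es$ (which follows from the admissible-metric extension constructed in the preceding paragraph and requires no regularity hypothesis on $\delta$) then gives $\mo\es=0$.
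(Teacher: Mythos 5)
The paper does not actually prove this lemma: it is imported verbatim from \cite[Lemma~4.3]{GSV20} (``For their proofs we refer to \cite{GSV20}''), so there is no in-text proof to compare against. Judged on its own terms, your outline is the natural argument and almost certainly the intended one: double $\es$ by reflection in $\partial\B^n$, prove $\M(\Gamma_{\hat\es})=2\M(\Gamma_\es)$ by a symmetry principle, and settle the degenerate case via touching complementary continua. Two points need more than you give them. First, in the folding step a curve $\gamma\in\Gamma_{\hat\es}$ may meet $\partial\B^n$ inside $U$; its folded image then contains points of $U\subset\partial\B^n$, which lie outside $\es\subset\B^n$, so the folded curve is not a member of $\Gamma_\es$ and admissibility of $\varrho$ for $\Gamma_\es$ does not directly give $\int_\gamma\tilde\varrho\,ds\ge1$. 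This is precisely the technicality V\"ais\"al\"a's symmetry principle \cite[35.2]{Vai71} is formulated to absorb (one works in the closed half and discards an exceptional subfamily of zero modulus); alternatively one can reflect the separating hypersurface families $\Sigma_\es$, $\Sigma_{\hat\es}$, where the issue does not arise. You should also record why $\overline V_j\cap\partial\B^n\ne\varnothing$ (it follows from properness of the embedding of $\partial_j\es$), since otherwise $C_j=\overline V_j\cup\overline{V_j^{\,*}}$ need not be connected.

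Second, in the converse your instinct that short curves alone do not force $\M(\Gamma_\es)=\infty$ is correct (a thin cusp of $V_0$ approaching $p$ is the worry), and the repair through the doubled configuration is the right one; but when $\delta=0$ the double is not a ring, so ``$\mo\hat\es=0$'' must be read as $\M(\Delta(C_0,C_1;\Sn))=\infty$. That statement does hold here, because $C_0$ and $C_1$ are \emph{non-degenerate} continua sharing the point $p$, so both meet every sphere $S(p,r)$ for small $r$ and the standard lower bound for the modulus of curves joining two continua meeting a pair of concentric spheres (e.g.\ \cite[10.12]{Vai71}) gives divergence; mere vanishing of $\dist(V_0,V_1)$ without this continuum structure would not suffice. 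Combining this with $\M(\Delta(C_0,C_1;\Sn))=\M(\Gamma_{\hat\es})\le 2\M(\Gamma_\es)$ — whose proof must survive the $U$-issue above — closes the argument. With these repairs your proof is correct.
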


\medskip
For $\xi\in\partial\B^n$ and $0<r_0<r_1<+\infty,$ we define a properly embedded semiring
$$
\T(\xi; r_0, r_1)=\left\{x\in\B^n: r_0\le\frac{|x-\xi|}{|x+\xi|}\le r_1\right\}
$$
in $\B^n$ bounded by two Apollonian spheres.
Then we have the formula $\mo\T(\xi; r_0, r_1)=\log(r_1/r_0)$ (see Lemma~4.2 in \cite{GSV20}).
The next proposition and theorem have been obtained in \cite{GSV20}; see Proposition~5.3 and Theorem~5.5, respectively.

\begin{prop}\label{prop:lim}
Let $f:\B^n\to\B^n$ be a homeomorphism and $\xi\in\partial\B^n.$
The mapping $f$ extends continuously to the point $\xi$ if
$$
\lim_{r\to0+}\mo f(\T(\xi; r,R))=+\infty
$$
for some $R>0.$
\end{prop}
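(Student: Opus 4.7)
The plan is to show that $\diam f(D_r)\to 0$ as $r\to 0+$, where $D_r := \{x\in\B^n : |x-\xi|/|x+\xi|<r\}$ is the inner component of $\B^n\setminus \T(\xi;r,R)$---a neighborhood of $\xi$ in $\B^n$ that shrinks to $\{\xi\}$. Once this is established, the nested intersection $\bigcap_{r>0}\overline{f(D_r)}$ reduces to a single point $\eta\in\overline{\B^n}$, and setting $f(\xi)=\eta$ yields the continuous extension.

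A preliminary normalization is useful: by post-composing $f$ with a M\"obius automorphism of $\B^n$ (which preserves the modulus hypothesis and the continuity question at $\xi$), we may assume $f^{-1}(0)\in \tilde V_1:=\{x\in\B^n:|x-\xi|/|x+\xi|>R\}$. Then $0\notin\es_r:=f(\T(\xi;r,R))$, and consequently $\infty\notin\es_r^*$, for all $r<R$.

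With this in hand, Lemma~\ref{lem:sep} applies to $\es_r$ whenever $\mo\es_r>0$: the double $\hat\es_r$ is a ring in $\Sn$ with $\mo\hat\es_r=\mo\es_r\to\infty$, and its two complementary continua $C_0^{(r)}$ and $C_1$ contain $f(D_r)$ and $f(\tilde V_1)\cup\{\infty\}$ respectively; crucially $C_1$ does not depend on $r$. Fix $r$ so small that $\mo\hat\es_r>A_n$ and pick any $x_0\in f(D_r)\subset C_0^{(r)}$. Theorem~\ref{thm:teich} applied to $\hat\es_r$, which separates $x_0$ from $\infty\in C_1$, furnishes an annular ring $\A(x_0;r_0,r_1)\subset\hat\es_r$ with $\log(r_1/r_0)\ge \mo\hat\es_r-A_n$; the separation property forces $C_0^{(r)}\subset\overline{B}(x_0,r_0)$ and $C_1\subset\Sn\setminus B(x_0,r_1)$.

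The outer radius $r_1$ is bounded uniformly in $r$: for any fixed $q_1\in\tilde V_1$, the point $f(q_1)\in C_1\cap\B^n$ yields $r_1\le|x_0-f(q_1)|\le 2$. Therefore
$$
\diam f(D_r)\le 2r_0\le 2r_1\,e^{A_n-\mo\hat\es_r}\le 4e^{A_n}e^{-\mo\hat\es_r}\longrightarrow 0
$$
as $r\to 0+$, completing the argument. The main obstacle is the bookkeeping in the doubling step: one must verify that $f(D_r)$ and $f(\tilde V_1)$ land in distinct complementary continua of $\hat\es_r$ (which follows from the construction underlying Lemma~\ref{lem:sep}) and that the preliminary normalization indeed places $\infty$ into $C_1$ rather than into $C_0^{(r)}$ or inside $\hat\es_r$.
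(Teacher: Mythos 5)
Your argument is correct and follows essentially the route the paper (via \cite{GSV20}) intends: double the image semiring by Lemma~\ref{lem:sep}, apply Theorem~\ref{thm:teich} to the resulting ring, and conclude that the complementary component containing $f(D_r)$ has diameter $O(e^{-\mo \es_r})$. In fact, after your normalization you are re-deriving (a slightly sharper, one-sided form of) Theorem~\ref{thm:sep}, which you could instead cite directly: since the outer component $f(\tilde V_1)$ is fixed and non-degenerate, the bound $\min\{\diam V_0,\diam V_1\}\le Q_n\exp(-\tfrac12\mo\es_r)$ already forces $\diam f(D_r)\to 0$.
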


\begin{thm}\label{thm:exten}
A homeomorphism $f:\B^n\to\B^n$ extends to a homeomorphism $f:\overline\B^n\to
\overline\B^n$ if and only if for each $\xi\in\partial\B^n,$ there is an $R=R_\xi>0$ such that
$$
\lim_{r\to0+}\mo f(\T(\xi; r,R))=+\infty.
$$
\end{thm}

\medskip
We also need the following separation theorem \cite[Theorem~4.8]{GSV20}.

\begin{thm}\label{thm:sep}
Let $\es$ be a properly embedded semiannulus in $\B^n.$
Then the connected components $V_0$ and $V_1$ of $\B^n\setminus\es$ satisfy the inequality
\begin{equation}\label{eq:sep}
\min\{\diam V_0,\diam V_1\}\le Q_n \exp\left(-\frac12\mo\es\right),
\end{equation}
where $Q_n=4\exp(A_n/2)$ and $A_n$ is given in \eqref{eq:An}.
\end{thm}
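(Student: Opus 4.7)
The plan is to split on the size of $\mo\es$. If $\mo\es \le A_n$, then $Q_n \exp(-\mo\es/2) \ge Q_n e^{-A_n/2} = 4$, while $V_0, V_1 \subset \B^n$ forces $\min\{\diam V_0, \diam V_1\} \le 2$, so \eqref{eq:sep} is trivially satisfied. Hence I may assume $\mo\es > A_n$ and apply Lemma~\ref{lem:sep} to form the double $\hat\es \subset \Sn$, a ring with $\mo\hat\es = \mo\es$ whose complementary components $W_0, W_1$ are filled continua with $V_j \subset W_j$ for $j=0,1$.

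The main step produces a bound on the product $d_0 d_1$, where $d_j := \diam V_j$, via the Teichm\"uller extremality recalled as the third bullet in Section~\ref{GroTeirings}. Assume without loss of generality that $d_0 \le d_1$. Pick $a, b \in \overline{V_0}$ with $|a-b| = d_0$ and $c, c' \in \overline{V_1}$ with $|c-c'| = d_1$, and take a M\"obius transformation $T$ of $\Sn$ with $T(a)=0$, $T(b)=-e_1$, $T(c)=\infty$. Writing $T = U \circ I_c$, where $I_c(x) := (x-c)/|x-c|^2$ is the inversion at $c$ and $U$ is a Euclidean similarity whose scale $\lambda = |a-c|\,|b-c|/d_0$ is fixed by $|T(a)-T(b)| = 1$, the classical identity $|I_c(x)-I_c(y)| = |x-y|/(|x-c|\,|y-c|)$ yields
$$
|T(c')| \,=\, \lambda \cdot \frac{|a-c'|}{|a-c|\,|c'-c|}
\,=\, \frac{|a-c'|\cdot|b-c|}{d_0 \cdot d_1}
\,\le\, \frac{4}{d_0 d_1},
$$
since each factor in the numerator is at most $\diam \B^n = 2$.

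Now $T(\hat\es)$ is a ring of modulus $\mo\es$ whose complementary filled continua $T(W_0), T(W_1)$ contain $\{0,-e_1\}$ and $\{T(c'), \infty\}$ respectively, so the third bullet in Section~\ref{GroTeirings} gives
$$
\mo\es \,\le\, \mo R_{T,n}(|T(c')|) \,=\, \log\Psi_n(|T(c')|).
$$
Letting $t \to 1^+$ in \eqref{eq:An} shows $A_n \ge \log\Psi_n(1)$, which together with the monotonicity of $\Psi_n$ and the hypothesis $\mo\es > A_n$ forces $|T(c')| > 1$; then \eqref{eq:An} gives $\log\Psi_n(|T(c')|) \le \log|T(c')| + A_n$. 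Substituting the upper bound on $|T(c')|$ and rearranging yields $d_0 d_1 \le 4 \exp(A_n - \mo\es)$, whence
$$
\min\{d_0, d_1\} \,\le\, \sqrt{d_0 d_1}
\,\le\, 2 \exp\bigl((A_n - \mo\es)/2\bigr)
\,\le\, Q_n \exp(-\mo\es/2).
$$

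The main obstacle will be the bookkeeping around $T$: verifying that the normalization of the similarity $U$ is correct so that $|T(c')| = |a-c'|\,|b-c|/(d_0 d_1)$ holds without stray factors, and then confirming $|T(c')| > 1$ so that the comparison from \eqref{eq:An} is legitimate. This last requirement is precisely what forces the case split at $A_n$ and shows that $\mo\es > A_n$ is doing real work rather than being a cosmetic assumption.
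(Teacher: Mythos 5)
Your proof is correct, and all the key steps check out: the case $\mo\es\le A_n$ is indeed trivial since $Q_ne^{-A_n/2}=4>\diam\B^n$; the doubling via Lemma~\ref{lem:sep} applies because $\mo\es>A_n>0$; the normalization $T=U\circ I_c$ gives exactly $|T(c')|=|a-c'|\,|b-c|/(d_0d_1)\le 4/(d_0d_1)$; and the verification that $|T(c')|>1$ (so that the supremum defining $A_n$ in \eqref{eq:An} is applicable) is handled correctly via $A_n\ge\log\Psi_n(1)$ and the monotonicity of $\Psi_n$. Note that the present paper does not prove Theorem~\ref{thm:sep} at all --- it is imported from \cite{GSV20} --- so there is no in-paper argument to compare with, but your route (doubling, M\"obius normalization to the Teichm\"uller configuration, the extremality of $R_{T,n}$, and the definition of $A_n$) is precisely the expected one; the only cosmetic omission is that the degenerate case $\min\{d_0,d_1\}=0$ should be set aside at the outset so that the points $a\ne b$ and $c\ne c'$ can be chosen.
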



\section{Quasiconformal, quasiregular mappings and their regularity properties}\label{Quasicon}


Due to the famous Liouville theorem, there are no conformal mappings in higher dimensions $n\ge 3$ except M\"obius transformations; see, e.g. \cite{Resh94}. The classes of quasiconformal mappings and their non-homeomorphic counterparts - quasiregular mappings (or mappings with bounded distortion) are substantially larger than the class of conformal mappings in $\mathbb{R}^n, n\ge 3,$ and many of the main geometric and topological properties of analytic functions in the complex plane have their counterparts in this $n$-dimensional function theory.

\subsection{Linear mappings}\label{ss:linear}
Following \cite[14.1]{Vai71}, for a real $n\times n$ invertible matrix $A,$ we define
\begin{equation}\label{eq:maxmin}
\|A\|\,=\,\sup_{h\in\R^n,~ h\ne 0}\frac{|Ah|}{|h|}\,=\,
\max_{|h|=1}|Ah|
\aand
l(A)\,=\,\inf_{h\in\R^n,~ h\ne 0}\frac{|Ah|}{|h|}\,=\,\min_{|h|=1}|Ah|\,.
\end{equation}
Then the quantities
\begin{equation*}
H_I(A)\,=\,\frac{|{\rm det}\,A|}{l(A)^n}\,,\qquad H_O(A)\,=\,\frac{\|A\|^n}{|{\rm det}\,A|}\,,\qquad H(A)\,=\,\frac{\|A\|}{l(A)}
\end{equation*}
are called the \textit{inner}, \textit{outer} and \textit{linear dilatation coefficients} of $A,$ respectively.

By linear algebra, the following inequalities
\begin{equation}\label{eq:relat}
H(A)\,\le\,\min\{H_I(A), H_O(A)\}\,\le\,H(A)^{n/2}\,\le\,\max\{H_I(A), H_O(A)\}\,\le\,H(A)^{n-1}\,,
\end{equation}
hold; cf. \cite[14.3]{Vai71}.

\subsection{Quasiregular and quasiconformal mappings}
Following \cite[I.2]{Ric93}, recall the definitions of quasiregular and quasiconformal mappings. A mapping $f\,:\,G\to \mathbb R^n,$ $n\ge 2,$ of a domain $G$ in $\mathbb R^n$ is \textit{quasiregular} if
\begin{enumerate}
\item $f$ is in ${\rm ACL}^n(G),$ and
\item there exists a constant $K,$ $1\le K<\infty,$ such that
\end{enumerate}
\begin{equation}\label{eq:qr}
\|f'(x)\|^n\,\le\, K J_f(x)\qquad\text{a.e.~in~} G.
\end{equation}

The smallest $K$ in  (\ref{eq:qr}) is the \textit{outer dilatation} $K_O$ of $f$ in the domain $G.$
Here and hereafter $f'(x)$ denotes the Jacobian matrix of $f$ at $x,$ and $J_f(x)$ denotes its determinant.
Note that for continuous mappings the classes ${\rm ACL}^n(G)$ and $W^{1,n}(G)$ coincide; see, e.g. \cite[A.5]{HK14}.

If $f$ is quasiregular, then it is also true that
\begin{equation}\label{eq:qr1}
J_f(x)\,\le\, K' l(f'(x))^n\qquad\text{a.e.~ in~} G
\end{equation}
for some $K'\ge 1,$ where $l(f'(x))$ is defined in (\ref{eq:maxmin}).
The smallest $K'\ge 1$ in (\ref{eq:qr1}) is the \textit{inner dilatation} $K_I(f)$ of $f$ in $G$.
A quasiregular homeomorphism $f:G\to f(G)$ is called \textit{quasiconformal} \cite[I.2]{Ric93}.

In the case of a continuous, discrete and open mapping $ f : G \to \mathbb{R}^n,$ the linear dilatation is defined as follows.
If $x\in G,$ $0<r<\dist (x,\partial G),$ we set
\begin{equation*}
l(x,r)\,=\,l_f(x,r)\,=\,\inf\limits_{|y-x|=r}|f(y)-f(x)|\,,\quad
L(x,r)\,=\,L_f(x,r)\,=\,\sup\limits_{|y-x|=r}|f(y)-f(x)|\,.
\end{equation*}
The quantity
\begin{equation*}
H(x,f)\,=\,\limsup\limits_{r\to 0}\frac{L(x,r)}{l(x,r)}
\end{equation*}
is called the \textit{linear dilatation}.

We also say that a point $x$  is a \textit{regular} point of $f,$ if $f$ is differentiable at $x$ and $J_f(x)\ne 0.$
For a regular point $x\in G,$ $H(x,f)$ equals the linear dilatation $H(f'(x))$
of the mapping $f'(x)$ (see \S \ref{ss:linear}).

Below we list some of the main properties of  quasiregular/quasiconformal mappings relevant for us, see, e.g. \cite{HKV20, HK14, Resh89, Ric93}.

Let $f:G\to \mathbb R^n$ be a quasiregular mapping. Then
\begin{itemize}
\item \emph{$f$ is differentiable a.e.~in $G.$}
\item \emph{$f$ satisfies the Lusin $(N)$-property, i.e. $m(E)=0,$ $E\subset G,$ implies $m(f(E))=0.$}
\item \emph{$f$ is locally H\"older continuous with exponent $K^{1/(1-n)}$ where $\max\{K_I(f),K_O(f)\}\le K.$}
\item \emph{If $f$ is nonconstant, it is discrete, open and orientation-preserving.}
\item \emph{$f$ belongs to $W^{1,p}_{\rm loc}(G)$ with $p=p(n,K)>n.$ }
\end{itemize}

In the above, the orientation-preserving property is in the topological sense.
Also, it is known that $J_f>0$ a.e.~in $G$ for a non-constant quasiregular
map $f:G\to\R^n.$
This was first shown by Martio, Rickman and V\"ais\"al\"a \cite[Theorem 8.1]{MRV69}
(see also \cite[p. 48]{Ric93}).


\subsection{Regularity properties of $W^{1,p}$-homeomorphisms}
Following mainly \cite[Chapters 2 and 4]{HK14}, we recall some needed regularity properties of continuous/homeomorphic mappings of the Sobolev classes $W^{1,p}.$

\begin{itemize}
\item \emph{Every mapping $f\in W^{1,p}$ is differentiable a.e. when $p>n$ and $n\ge 2.$}
\item \emph{For $p=n$ there exist mappings $f\in W^{1,p}$ which are not continuous at any point, and, therefore, differentiable nowhere.}
\item \emph{Every homeomorphism of $W^{1,p},$ with $p>n-1$ for $n\ge 3$, $p\ge 1$ for $n=2$ is differentiable a.e.}
\item \emph{A continuous mapping $f\in W^{1,p}$ always satisfies the Lusin $(N)$-property with
respect to the $n$-dimensional Lebesgue measure when $p>n.$}
\item \emph{There exist continuous mappings $f\in W^{1,n}$ which fail to have the Lusin $(N)$-property.}
\item \emph{For homeomorphisms $f\in W^{1,n},$ the Lusin $(N)$-property holds.}
\item \emph{There are homeomorphisms of $W^{1,p},$ $p<n,$ which do not possess the Lusin $(N)$-property.}
\end{itemize}

For differentiability a.e., the borderline class $W^{1,n-1}$ is of special interest.
We need the following statement proved in \cite{Teng14}.

\begin{lem}\label{lem:Teng}
Let $G\subset\mathbb R^n$ be a domain for $n \ge 2.$
Suppose that $f\in W^{1,n-1}_{\rm loc}(G)$ is a
continuous, discrete and open mapping with (pointwise) inner dilatation $L_f(x)=H_I(f'(x))$
satisfying $L_f\in L^1_{\rm loc}(G).$
Then $f$ is differentiable a.e.~in $G.$
\end{lem}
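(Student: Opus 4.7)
The plan is to upgrade the approximate differentiability afforded by Sobolev regularity to genuine Fr\'echet differentiability via Stepanov's theorem. Since $f\in W^{1,n-1}_{\rm loc}(G)$, the ACL representative of $f$ has a formal derivative $Df(x)$ (the matrix of weak partials) defined almost everywhere, and $\|Df\|^{n-1}$ is locally integrable. In particular $L_f(x)=H_I(Df(x))$ is a finite real number at a.e.\ point, and $f$ is approximately differentiable almost everywhere with approximate derivative $Df(x)$. The task thus reduces to establishing the pointwise upper Lipschitz bound
\[
\limsup_{y\to x}\frac{|f(y)-f(x)|}{|y-x|}<\infty
\qquad\text{for a.e. }x\in G.
\]

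Continuity, openness, and discreteness of $f$ supply the topological maximum principle $\partial f(U)\subset f(\partial U)$ for $U\Subset G$, which forces $\sup_{y\in\overline{B(x,r)}}|f(y)-f(x)|=L_f(x,r):=\sup_{|y-x|=r}|f(y)-f(x)|$. Accordingly, it suffices to show that the linear dilatation
\[
H(x,f)=\limsup_{r\to 0+}\frac{L_f(x,r)}{l_f(x,r)}
\]
is finite at a.e.\ $x$, where $l_f(x,r)=\inf_{|y-x|=r}|f(y)-f(x)|$. The crux is a Poletsky-type modulus inequality for the discrete open map $f$: for every curve family $\Gamma$ compactly supported in $G$ and every $\varrho$ admissible for $f(\Gamma)$,
\[
\M(\Gamma)\le c_n\int_{\R^n}L_f(y)\,\varrho(y)^n\,dm_n(y).
\]
Apply this with $\Gamma$ the family of curves joining the bounding spheres of the dyadic annulus $B(x,2r)\setminus\overline{B(x,r)}$ (whose modulus is the dimensional constant $\omega_{n-1}(\log 2)^{1-n}$) and $\varrho$ the extremal metric of the spherical ring about $f(x)$ with radii $l_f(x,r)$ and $L_f(x,r)$; admissibility for $f(\Gamma)$ is ensured by the topological maximum principle together with discreteness of $f$. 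Rearranging the resulting inequality gives
\[
\log\frac{L_f(x,r)}{l_f(x,r)}\le C_n\biggl(\frac{1}{r^n}\int_{B(x,2r)}L_f\,dm_n\biggr)^{1/(n-1)}.
\]

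The Lebesgue differentiation theorem now shows that at every Lebesgue point of $L_f$ the right-hand side remains bounded as $r\to 0+$, so $H(x,f)<\infty$ almost everywhere. Combined with the approximate differentiability of $f$ at such points, Stepanov's theorem delivers Fr\'echet differentiability at a.e.\ $x\in G$. The main obstacle is validating the Poletsky-type modulus inequality at the borderline Sobolev exponent $p=n-1$: in the quasiregular regime $f\in W^{1,n}_{\rm loc}$ it is standard, but here the full $W^{1,n}$ regularity is unavailable. One must instead exploit the pointwise matrix inequality $\|A\|^n\le H_I(A)^{n-1}|\det A|$ (a consequence of $H(A)\le H_I(A)$ from \eqref{eq:relat}) together with $L_f\in L^1_{\rm loc}$ and the path-lifting theory of open discrete maps to justify the underlying change of variables at this critical exponent.
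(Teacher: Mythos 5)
First, note that the paper does not prove this lemma at all: it is imported verbatim from Tengvall [Teng14], so there is no in-paper argument to compare against. Your outline in fact shadows the strategy of that reference (a weighted modulus/capacity inequality, a pointwise oscillation bound at Lebesgue points of $L_f$, Stepanov's theorem), but as submitted it is a reduction rather than a proof, because its central ingredient is asserted and then explicitly deferred. The Poletsky--V\"ais\"al\"a type inequality $\M(\Gamma)\le c_n\int L_f\,\varrho^n\,dm_n$ for a continuous, discrete, open map that is merely $W^{1,n-1}_{\rm loc}$ with $L_f\in L^1_{\rm loc}$ is precisely the hard analytic content here: in the quasiregular setting it rests on $W^{1,n}$ regularity, the Lusin $(N)$-property on almost every curve, and $J_f>0$ a.e., none of which are available at the exponent $n-1$. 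Your closing sentence names this as ``the main obstacle'' and gestures at the matrix inequality $\|A\|^n\le H_I(A)^{n-1}|\det A|$ and path lifting, but does not carry out the argument; this is where the theorem actually lives (it is the $K_I$-inequality of Koskela--Onninen type, itself a substantial theorem). As written the inequality is also notationally inconsistent: $\varrho$ is admissible for $f(\Gamma)$ and hence lives on the target, while $L_f$ lives on the source; for a non-injective discrete open map the correct weight on the target side is the sum of $L_f$ over the preimages in the relevant normal domain.

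Second, the reduction ``it suffices to show $H(x,f)<\infty$ a.e.'' is not valid by itself. Stepanov's theorem needs $\limsup_{y\to x}|f(y)-f(x)|/|y-x|<\infty$, i.e.\ control of $L_f(x,r)/r$, whereas finiteness of the linear dilatation only controls the ratio $L_f(x,r)/l_f(x,r)$; both quantities could still blow up relative to $r$. You need an additional a.e.\ bound on $l_f(x,r)/r$. Two standard ways to get it: (i) the volume derivative of the multiplicity-weighted pullback measure $E\mapsto\int_E N(f,y)\,dm_n(y)$ (locally finite for discrete open maps) is finite a.e., and $\Omega_n\,l_f(x,r)^n\le m_n(f(B(x,r)))$ by the topological maximum principle; or (ii) at a point of approximate differentiability one finds, for every small $r$, some $y$ with $r/2\le|y-x|\le r$ and $|f(y)-f(x)|\le Cr$, whence $l_f(x,|y-x|)\le Cr$ and then $L_f(x,r/2)\le H(x,f)\,l_f(x,|y-x|)(1+o(1))\le C'r$. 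Your final sentence about combining with approximate differentiability presumably intends (ii), but the step is not spelled out, and it is exactly the step that converts your dilatation bound into the hypothesis of Stepanov's theorem. So: right skeleton, but the two load-bearing steps --- the borderline modulus inequality and the passage from $H(x,f)<\infty$ to a pointwise Lipschitz bound --- are missing.
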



\section{Directional dilatations and modulus estimates}\label{Direc}


\subsection{Directional dilatations}
We recall two directional characteristics in $\mathbb R^n$,
using the derivative of $f$ in a direction $h, h\ne 0$, at $x$,
given by
\begin{equation*}
\partial_h f(x)\,=\,\lim\limits_{t\to 0^+}\frac{f(x+th)-f(x)}{t}\,,
\end{equation*}
whenever the limit exist.
Note that $\partial_h f(x)=f'(x)h$ if $f$ is differentiable at $x.$

Let $f:\,G\to \mathbb R^n$ be an orientation-preserving homeomorphism and $x\in G$ be a regular point of $f.$
For a point $x_0\in\mathbb R^n$, we define the \textit{angular} and
\textit{normal dilatations} of the mapping $f$ at $x\in G$,
$x\ne x_0$ \textit{with respect to} $x_0$ by
\begin{equation*}
D_f(x,x_0)=\frac{J_f(x)} {\ell_f(x,x_0)^n},\quad\quad
T_f(x,x_0)=\biggl(\frac{\mathcal L_f(x,x_0)^n}{J_f(x)}\biggr)^{1/(n-1)},
\end{equation*}
respectively.
Here
\begin{equation*}
\ell_f(x,x_0)=\min\limits_{|h|=1}\frac{|\partial_h f(x)|}{|h\cdot u|}, \quad\quad
\mathcal L_f(x,x_0)=\max\limits_{|h|=1}\bigl(|\partial_h f(x)||h \cdot u|\bigr),
\end{equation*}
and $u=(x-x_0)/|x-x_0|$.
The dilatations $D_f(x,x_0)$ and $T_f(x,x_0)$ are both measurable in $x\in G$.

Following the notations in \cite{GG09} we denote by $L_f(x)$ and $K_f(x)$ the inner and outer dilatations of $f$ at a regular point $x$ of $f,$ respectively.
Namely, $L_f(x)=H_I(f'(x))$ and $K_f(x)=H_O(f'(x)).$
Then the chain of inequalities
\begin{equation*}
l(f'(x))\le \ell_f(x,x_0)\le |\partial_u f(x)|\le \mathcal L_f(x,x_0)\le \|f'(x)\|
\end{equation*}
implies
\begin{equation}\label{eq.2.2}
\frac1{K_f(x)}\le D_f(x,x_0)\le L_f(x).
\end{equation}
The normal dilatation $T_f(x,x_0)$ has tighter bounds than
$D_f(x,x_0)$, since
\begin{equation}\label{eq.2.3}
\frac1{K_f(x)}\le \frac1{L_f(x)^{1/(n-1)}}\le T_f(x,x_0) \le
K_f(x)^{1/(n-1)}\le L_f(x).
\end{equation}

The dilatations $D_f(x,x_0)$ and $T_f(x,x_0)$ for the
multidimensional case have been introduced in \cite{GG09} and
\cite{Gol10}, respectively. Note that  the angular and normal dilatations range both
between 0 and $\infty$, while the classical dilatations are always
greater than or equal to 1, cf. (\ref{eq:relat}).
Clearly, $L_f(x)=K_f(x)\equiv 1$ for conformal mappings $f$ and, therefore, both directional dilatations also are equal to 1. But not vice versa.
Observe also that these directional dilatations provide a reasonable kind of flexibility,
although their concrete evaluations are much more complicated than those of classical ones; see, e.g. \cite{Gol18}.
The latter fact can be illustrated by the rotation of the punctured ball $\mathbb B^n\setminus\{0\}$ given by
\begin{equation*}
f(x)\,=\,(x_1\cos\theta-x_2\sin\theta, x_2\cos\theta+x_1\sin\theta,
x_3,...,x_n)\,,
\end{equation*}
with $x=(x_1,...,x_n)$ and $\theta=\log(x_1^2+x_2^2)$.
This mapping preserves the volume: $J_f(x)\equiv 1.$
By a straightforward calculation, one obtains
$L_f(x)=K_f(x)=(1+\sqrt{2})^n$ for $x\ne 0,$ and this mapping is quasiconformal in $\mathbb B^n\setminus\{0\}.$
However, $D_f(x,0)=1$ at all points $x$ of $\mathbb B^n\setminus\{0\};$ see \cite{GG09}.


\subsection{Main Lemma}
We consider the semiring
$\es=\es(x_0;r,R)=\{x\in\mathbb H^n: r\le |x-x_0|\le R\}$ for $x_0 \in \partial \mathbb H^n.$
We recall that $\mo \es=\log(R/r).$
The modulus distortion under quasiconformal and quasiregular mappings plays an essential role in geometric function theory; see, e.g. \cite{Sug10} and \cite{ZYRH24}.
The following lemma gives upper and lower bounds for the distortions of moduli of semirings for homeomorphisms of Sobolev class $W^{1,n};$ cf. \cite[Corollary 5.1]{Gol11}.

\medskip
\begin{lem}\label{lem:modest}
Let $f$ be an orientation-preserving homeomorphism of $\Hn$ onto a domain in $\R^n.$
Suppose that $f$
belongs to the Sobolev class $W^{1,n-1}_{\rm loc}(\Hn)$ and posesses Lusin's $(N)$ and $(N^{-1})$-properties with respect to the $n$-dimensional Lebesgue measure $m=m_n.$
Suppose further that the inner dilatation $L_f(x)$ is locally integrable in the semiring
$\es=\es(x_0;r_0,r_1)$ for some $x_0\in\partial\mathbb H^n,$ and
for almost every hypersurface $S\in \Sigma_\es$ the restriction $f|_S$ satisfies
the $(N^{-1})$-property with respect to $(n-1)$-dimensional Hausdorff measure.
Then
\begin{equation}\label{eq1est}
\begin{split}
\left(\frac{2}{\omega_{n-1}\log(r_1/r_0)}\int\limits_\es
\frac{D_f(x,x_0)}{|x-x_0|^n}\,dm_n(x)\right)^{1/(1-n)}\, &
\le\,\frac{\mo f(\es)}{\mo \es}\,\\&\le\,
\frac{2}{\omega_{n-1}\log(r_1/r_0)}\int\limits_\es
\frac{T_f(x,x_0)}{|x-x_0|^n}\,dm_n(x)\,,
\end{split}
\end{equation}
and under the additional assumption $\mo \es \ge \mo f(\es),$
\begin{equation}\label{eq2est}
\begin{split}
-\frac {2}{\omega_{n-1}} \int\limits_{\es}\frac{T_f(x,x_0)-1}{|x-x_0|^n}\,dm_n(x)
\,&\le\,\mo \es-\mo f(\es)
\,\\&\le\, \frac {2}{\omega_{n-1}} \int\limits_{\es}\frac{D_f(x,x_0)-1}{|x-x_0|^n}\,dm_n(x)\,.
\end{split}
\end{equation}
\end{lem}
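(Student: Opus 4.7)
\medskip\noindent\textbf{Proof plan.}
My approach is to prove \eqref{eq1est} by constructing two admissible metrics on $f(\es)$, one adapted to each of the two equivalent modulus formulations
\[
\mo f(\es) \;=\; \left[\frac{\omega_{n-1}}{2\M(\Gamma_{f(\es)})}\right]^{1/(n-1)} \;=\; \left(\frac{\omega_{n-1}}{2}\right)^{1/(n-1)}\M(\Sigma_{f(\es)}),
\]
and then to deduce \eqref{eq2est} by a short convexity argument together with the identity $\frac{2}{\omega_{n-1}}\int_\es |x-x_0|^{-n}\,dm_n = \log(r_1/r_0) = \mo\es$. Both admissible metrics are built from the canonical radial weight $C/|x-x_0|$ on $\es$, transported to $f(\es)$ along $f$ with an appropriate direction-sensitive correction.

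For the lower bound (the $D_f$-side) I upper-bound $\M(\Gamma_{f(\es)})$. The starting device is the strengthened radial inequality $\int_\gamma |u\cdot d\gamma|/|x-x_0| \ge \log(r_1/r_0)$ valid for every $\gamma\in\Gamma_\es$, where $u = (x-x_0)/|x-x_0|$. Writing $\gamma = f^{-1}\tilde\gamma$ with $\tilde\gamma\in\Gamma_{f(\es)}$, applying $d\gamma = f'(x)^{-1}d\tilde\gamma$, and estimating the resulting bilinear form by the operator norm yields admissibility of
\[
\tilde\varrho(y) \;=\; \frac{1}{\log(r_1/r_0)\cdot|x-x_0|\cdot\ell_f(x,x_0)}, \qquad x = f^{-1}(y),
\]
where I use the short matrix identity $|(f'(x)^{-1})^T u| = 1/\ell_f(x,x_0)$. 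A volume change of variables $y = f(x)$, valid by Lusin's $(N)$-property, then converts $\int_{f(\es)}\tilde\varrho^n\,dm_n$ into $(\log(r_1/r_0))^{-n}\int_\es D_f/|x-x_0|^n\,dm_n$ via the defining identity $D_f = J_f/\ell_f^n$, giving the claim after taking the $(n-1)$th root.

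For the upper bound (the $T_f$-side) I instead upper-bound $\M(\Sigma_{f(\es)})$. Since radial projection from $x_0$ sends every $S\in\Sigma_\es$ surjectively onto the open half-sphere $\Sp$, we have $\int_S |u\cdot\nu_S|/|x-x_0|^{n-1}\,dA_S \ge \omega_{n-1}/2$ for every such $S$. Transferring this to $\tilde S = f(S)$ by the surface-area formula $dA_{\tilde S}(y) = J_f(x)\,|(f'(x)^T)^{-1}\nu_S|\,dA_S(x)$ and maximising pointwise over unit normals $\nu$ yields the admissible function
\[
\tilde\varrho(y)^{n-1} \;=\; \frac{2}{\omega_{n-1}}\cdot\sup_{|\nu|=1}\frac{|u\cdot\nu|}{|x-x_0|^{n-1}\,J_f(x)\,|(f'(x)^T)^{-1}\nu|}
\]
for $\Sigma_{f(\es)}$. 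A dual of the previous matrix identity evaluates this inner supremum to $|f'(x)\,u| = |\partial_u f(x)|$, and the crucial estimate $|\partial_u f|\le \mathcal L_f(x,x_0)$ (choose $h = u$ in the definition of $\mathcal L_f$) followed by the volume change of variables produces the claimed $T_f$-integral.

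For \eqref{eq2est}, the first inequality follows at once from the upper bound of \eqref{eq1est} by rearranging and using the constant identity above. For the second, set $I = \frac{2}{\omega_{n-1}\log(r_1/r_0)}\int_\es D_f/|x-x_0|^n\,dm_n$, so that the lower bound of \eqref{eq1est} reads $\mo f(\es)/\mo\es \ge I^{-1/(n-1)}$; the hypothesis $\mo\es \ge \mo f(\es)$ then forces $I\ge 1$, and on $[1,\infty)$ the elementary inequality $I^{-1/(n-1)} \ge 2-I$ (verified by noting that $h(I) = I^{-1/(n-1)}+I-2$ satisfies $h(1)=0$, $h'(1)=(n-2)/(n-1)\ge 0$, $h''>0$) converts the multiplicative bound into the required additive one. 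The main obstacle I anticipate is the justification of both change-of-variables steps under the borderline $W^{1,n-1}_{\rm loc}$ regularity: a.e.\ differentiability of $f$ on $\es$ is supplied by Lemma \ref{lem:Teng} via the hypothesis $L_f \in L^1_{\rm loc}(\es)$ (noting $D_f \le L_f$), while the surface-level area transformation on modulus-almost-every $S \in \Sigma_\es$ depends on the assumed $(N^{-1})$-property of $f|_S$, and the volume change of variables between $\es$ and $f(\es)$ in both directions is ensured by the assumed Lusin $(N)$- and $(N^{-1})$-properties of $f$ itself.
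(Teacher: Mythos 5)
Your proposal is correct and follows essentially the same route as the paper: you bound $\M(\Gamma_{f(\es)})$ from above with the admissible density $1/\bigl(\log(r_1/r_0)\,|x-x_0|\,\ell_f(x,x_0)\bigr)$ and $\M(\Sigma_{f(\es)})$ with a density controlled by $\mathcal L_f$ (your pointwise supremum over normals evaluates to $|\partial_u f(x)|\le\mathcal L_f(x,x_0)$, so it reproduces the paper's bound), then convert via the two definitions of $\mo$ and rearrange for \eqref{eq2est}. The only cosmetic differences are your use of the transpose-inverse identities $|(f'(x)^{-1})^Tu|=1/\ell_f$ and $\sup_\nu|u\cdot\nu|/|(f'(x)^T)^{-1}\nu|=|\partial_uf|$ in place of the paper's directional-derivative computations, and the convexity inequality $I^{-1/(n-1)}\ge 2-I$ on $[1,\infty)$ in place of the paper's step $\mo\es/\mo f(\es)\le(\mo\es/\mo f(\es))^{n-1}$; both yield the same conclusion.
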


\begin{rem}
We note that $\mo\es=\log(r_1/r_0)$ by \eqref{eq:modulus}.
If we introduce the measure $d\nu_{x_0}(x)=|x-x_0|^{-n}dm_n(x),$
we have $\nu_{x_0}(\es)=(\omega_{n-1}/2)\log(r_1/r_0).$
Hence, the estimates \eqref{eq1est} can be written in the form
$$
\left(\frac1{\nu_{x_0}(\es)}\int\limits_\es
D_f(x,x_0)\,d\nu_{x_0}(x)\right)^{1/(1-n)}\,
\le\,\frac{\mo f(\es)}{\mo \es}\,\le\,
\frac{1}{\nu_{x_0}(\es)}\int\limits_\es
T_f(x,x_0)\,d\nu_{x_0}(x)\,.
$$
\end{rem}

\subsection{Proof of the first inequality in \eqref{eq1est}}
Denote by $\es_0$ the set of regular points of $f$ in $\es.$
By virtue of Lemma \ref{lem:Teng}, we find that the set $B_0=\es\setminus\es_0$ has
the $n$-dimensional Lebesgue measure zero, $m_n(B_0)=0.$
The Lusin $(N^{-1})$-property is equivalent to $J_f(x)\ne 0$ a.e.; cf. \cite{Pon95}. Now the $(N)$-property implies that also $m_n(f(B_0))=0.$ Note also that by (\ref{eq.2.2})--(\ref{eq.2.3}) the local integrability of $L_f(x)$ implies the same property for both directional dilatations $D_f(x,x_0)$ and $T_f(x,x_0).$

Recall that $\Gamma_\es$ is the family of curves which join the boundaries $|x-x_0|=r_0$ and $|x-x_0|=r_1$ in $\es.$
Let $\varrho\ge 0$ be a Borel function on $[r_0,r_1]$ such that
\begin{equation}\label{admis1}
\int\limits_{r_0}^{r_1} \varrho(t)\,dt\,=\,1.
\end{equation}
For any $y\in f(\es\setminus B_0)=f(\es)\setminus f(B_0)$ we define
\begin{equation*}
\varrho^*(y)\,=\,\varrho(|x-x_0|)\left(\frac{D_f(x,x_0)}{J_f(x)}\right)^{1/n}
=\frac{\varrho(|x-x_0|)}{\ell_f(x,x_0)}\,,
\end{equation*}
where $x=f^{-1}(y),$
and set $\varrho^*(y)=\infty$ for $y\in f(B_0),$ and $\varrho^*(y)=0$ otherwise.

We now claim that $\int_{\gamma^*} \varrho^*\,d\area^1\ge 1$
for almost every curve $\gamma^*\in f(\Gamma_\es)=\Gamma_{f(\es)}.$
Then it will follow that $\varrho^*$ is admissible for $\Gamma_{f(\es)}$
(see e.g. \cite[Theorem~9.1]{MRSY}).
Though this claim is found in \cite[Lemma~2.4]{GG09},
in order to see how the definition of $D_f(x,x_0)$ works,
we include some details of the proof here.
For almost every curve $\gamma^*=f(\gamma)\in \Gamma_{f(\es)}=f(\Gamma_\es),$
both $\gamma$ and $\gamma^*$ are rectifiable.
We now parametrize them as $x=\gamma(s)$ and $y=\gamma^*(s^*)$
by their length parameters so that
$|d\gamma(s)/ds|=1$ and $|d\gamma^*(s^*)/ds^*|=1$ a.e.
Noting that $h_1=d\gamma(s)/ds$ is a unit vector, we have
$$
\frac{d\gamma^*(s^*)}{ds}=f'(x)\frac{d\gamma(s)}{ds}=f'(x)h_1=\partial_{h_1}f(x)
$$
as long as $f$ is regular at $x=\gamma(s).$
Hence,
$$
\frac{ds^*}{ds}=\left|\frac{d\gamma^*}{ds}\right|=|\partial_{h_1}f(x)|\,.
$$
Since the quantity $r=|x-x_0|$ has the gradient
$$
\nabla r=\frac{x-x_0}{r}=:u,
$$
we have the expression
$$
\frac{dr}{ds}
=\nabla r\cdot\frac{d\gamma(s)}{ds}
=u\cdot h_1.
$$
Let $a$ and $a^*$ be the lengths of the curves $\gamma$ and $\gamma^*,$
respectively.
Then
\begin{align*}
\int\limits_{\gamma^*} \varrho^*\,d\area^1
&=\int_0^{a^*}\varrho^*(\gamma^*(s^*))ds^* \\
&=\int_0^a \frac{\varrho(|x-x_0|)}{\ell_f(x,x_0)}\frac{ds^*}{ds}ds \\
&=\int_{r_0}^{r_1} \frac{\varrho(r)}{\ell_f(x,x_0)}\frac{ds^*/ds}{dr/ds}dr \\
&=\int_{r_0}^{r_1} \frac{\varrho(r)}{\ell_f(x,x_0)}
\frac{|\partial_{h_1}f(x)|}{h_1\cdot u}dr \\
&\ge\int_{r_0}^{r_1}\varrho(r)dr=1
\end{align*}
as required.

Using the chain rule and the Lusin $(N)$-property of $f,$ we have
\begin{equation*}
\M(\Gamma_{f(\es)})\,\le\,\int\limits_{f(\es)}\varrho^*(y)^n dm_n(y)\,
=\,\int\limits_\es \varrho(|x-x_0|)^n D_f(x,x_0)\,dm_n(x)\,.
\end{equation*}

Since $\varrho(t)=1/(t\log(r_1/r_0))$ satisfies (\ref{admis1}), the following estimate
is obtained:
\begin{equation}\label{Gamma}
\M(\Gamma_{f(\es)})\,\le\,\frac{1}{\log^n(r_1/r_0)}
\int\limits_\es\frac{D_f(x,x_0)}{|x-x_0|^n}\,dm_n(x)\,.
\end{equation}
Finally, in view of the first definition in \eqref{eq:Ziem},
we obtain the first inequality in \eqref{eq1est}.

\subsection{Proof of the second inequality in \eqref{eq1est}}
Let $\varrho$ be a nonnegative Borel function on the unit hemisphere
$\Sp=\{z\in\mathbb H^n: |z|=1\}$ such that
\begin{equation}\label{admis2}
\int\limits_{\Sp} \varrho(z)^{n-1}\,d\sigma_0(z)\,=\,1\,,
\end{equation}
where $d\sigma_0$ stands for the area element on the unit sphere.
Similarly to the above,
for any $y\in f(\es\setminus B_0),$ we define
\begin{equation*}
\varrho^*(y)\,=\,\frac1{|x-x_0|}\,\varrho\left(\frac{x-x_0}{|x-x_0|}\right)
\left(\frac{\mathcal L_f(x,x_0)}{J_f(x)}\right)^{1/(n-1)}
\,,
\end{equation*}
where $x=f^{-1}(y),$
and set $\varrho^*(y)=\infty$ for $y\in f(B_0)$ and $\varrho^*(y)=0$ at other points.
We now recall that $\Sigma_\es$ is the family of all surfaces $S$ which separate
the boundaries $|x-x_0|=r_0$ and $|x-x_0|=r_1$ in $\es.$
Then we claim that $\varrho^*$ is admissible for $f(\Sigma_\es)=\Sigma_{f(\es)}.$
This is due to \cite[Theorem~1]{Gol10} but a proof is given to see how the definition
of the normal dilatation $T_f(x,x_0)$ works.
The proof of the claim is a little sketchy.
More rigorous arguments may be found in \cite{KR08}.
Note also that by the assumptions of Lemma~\ref{lem:modest} and by \cite[Thm~9.1]{MRSY} the $(n-1)$-dimensional Hausdorff area of the exceptional set $B_0$ for $S$ vanishes
for almost every $S^*\in \Sigma_{f(\es)}.$
To show the claim, it is enough to prove that the inequality
$\int_{S^*}(\varrho^*)^{n-1}d\area^{n-1}\ge 1$ for almost every
$S^*\in \Sigma_{f(\es)}.$
For almost every $f(S)=S^*$ in $f(\Sigma_\es)=\Sigma_{f(\es)}$
we may assume they are regular enough so that the following operations are allowed.
Conventionally, we put $y=f(x)$ for a regular point $x$ of $f$
and let $n$ and $n^*$ be unit normal vectors of $S$ at $x$ and of $S^*$ at $y,$
respectively.
If we denote by $d\sigma$ and $d\sigma^*$ the $(n-1)$-dimensional
area elements of $S$ and $S^*,$
respectively, the rate of the volume change under $f$ at the point $x$ may be
expressed by
$$
J_f(x)=\frac{d\sigma^*(y)}{d\sigma(x)}\times |f'(x)n\cdot n^*|
=\frac{d\sigma^*(y)}{d\sigma(x)}\times |\partial_nf(x)\cdot n^*|.
$$
We next consider the projection $\pi:\mathbb H^n\to\Sp$ defined by
$u=\pi(x)=(x-x_0)/|x-x_0|.$
Then we have
$$
\frac{d\sigma_0(u)}{d\sigma(x)}=\frac{|n\cdot u|}{|x-x_0|^{n-1}}.
$$
Since $\pi(S)=\Sp$ and
$$
\mathcal L_f(x,x_0)\ge |\partial_nf(x)||n\cdot u|
\ge |\partial_nf(x)\cdot n^*||n\cdot u|,
$$
we now compute
\begin{align*}
\int_{S^*}(\varrho^*)^{n-1}d\sigma^*
&=\int_S\frac{\varrho(u)^{n-1}}{|x-x_0|^{n-1}}
\frac{\mathcal L_f(x,x_0)}{J_f(x)}\frac{d\sigma^*(y)}{d\sigma(x)}
d\sigma(x) \\
&=\int_S\varrho(u)^{n-1}\frac{\mathcal L_f(x,x_0)}{|\partial_nf(x)\cdot n^*|}
\frac{d\sigma(x)}{|x-x_0|^{n-1}} \\
&\ge\int_S\varrho(u)^{n-1}\frac{|n\cdot u|d\sigma(x)}{|x-x_0|^{n-1}} \\
&\ge\int_\Sp\varrho(u)^{n-1}d\sigma_0(u)=1.
\end{align*}
Thus the claim has been shown.

The chain rule and the Lusin $(N)$-property of $f$ now yield the inequality
\begin{align*}
\M(\Sigma_{f(\es)})\,&\le\,\int\limits_{f(\es)}\varrho^*(y)^n dm_n(y) \\
&=\,\int\limits_{f(\es)} \frac{1}{|x-x_0|^{n}}\varrho\left(\frac{x-x_0}{|x-x_0|}\right)^n
\left(\frac{\mathcal L_f(x,x_0)^n}{J_f(x)^n}\right)^{1/(n-1)}\,dm_n(y) \\
&=\,\int\limits_\es \varrho\left(\frac{x-x_0}{|x-x_0|}\right)^n
\frac{T_f(x,x_0)}{|x-x_0|^{n}}\,dm_n(x)\,.
\end{align*}
Letting $\varrho(u)=(2/\omega_{n-1})^{1/(n-1)},$ which satisfies (\ref{admis2}),
we obtain the inequality
\begin{equation}\label{Sigma}
\M(\Sigma_{f(\es)})\,\le\,\left(\frac2{\omega_{n-1}}\right)^{n/(n-1)}
\int\limits_\es\frac{T_f(x,x_0)}{|x-x_0|^n}\,dm_n(x)\,.
\end{equation}

The second definition of the modulus of a semiring in \eqref{eq:Ziem}
together with the inequality \eqref{Sigma} yields the second inequality in
(\ref{eq1est}).

\subsection{Proof of \eqref{eq2est}}
To prove \eqref{eq2est} we have to assume that $\mo \es \ge \mo f(\es),$ which has applications in the further discussions. Hence, by \eqref{eq1est},
\begin{equation*}
\frac{\mo \es}{\mo f(\es)}\,\le\,\left(\frac{\mo \es}{\mo f(\es)}\right)^{n-1}\,\le\,\frac{2}{\omega_{n-1}\log(r_1/r_0)}\int\limits_\es
\frac{D_f(x,x_0)}{|x-x_0|^n}\,dm_n(x)\,.
\end{equation*}
This yields
\begin{equation*}
\mo \es-\mo f(\es)\,\le\,\frac{2}{\omega_{n-1}}\frac{\mo f(\es)}{\mo \es}\int\limits_{\es}\frac{D_f(x,x_0)-1}{|x-x_0|^n}\,dm_n(x)\,,
\end{equation*}
and the second inequality in \eqref{eq2est} holds.
The first one in \eqref{eq2est} follows from \eqref{eq1est} immediately.

\begin{rem}
The lower bound in \eqref{eq2est} always holds no matter whether $\mo \es \ge \mo f(\es)$ holds or not. Note also that in the case $\mo \es \ge \mo f(\es),$ the first inequality in \eqref{eq2est} is nontrivial, since $T_f(x,x_0)-1$ can be negative.
\end{rem}

\begin{rem}
Although in Lemma~\ref{lem:modest} we consider the upper half-space $\mathbb H^n,$ obviously the modulus estimates may be easily extended to an arbitrary domain $G$ and a properly embedded semiring $\es\subset G$ under a suitable regularity assumption
of the boundary of $G.$
\end{rem}


\subsection{Lower integral bound}
Here we estimate the modulus of $f(\es)$ for a semiring $\es$ in terms of integrals depending on the angular dilatation $D_f(x,x_0).$

\medskip
\begin{lem}\label{lem:modbound}
Let $f:\mathbb H^n\to\mathbb R^n$ be an orientation-preserving homeomorphism satisfying the assumptions of Lemma~\ref{lem:modest}. Then
for a semiring $\es=\es(x_0; r,R)$ centered at $x_0\in \mathbb H^n$
\begin{equation}\label{modintbound}
\mo f(\es)\,\ge\,\int\limits^R_r\frac{dt}{t\Psi_D(t,x_0)^{1/(n-1)}}\,,
\end{equation}
where
\begin{equation}\label{eq:Psi}
\Psi_D(t,x_0)=\frac{2}{\omega_{n-1}}\int\limits_{\Sp} D_f(x_0+tz, x_0)\,d\sigma_0(z).
\end{equation}
\end{lem}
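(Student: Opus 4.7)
My plan is to repeat the proof of the first inequality in Lemma~\ref{lem:modest}, but with an optimally chosen one-dimensional density $\varrho$ on $[r,R]$ in place of the uniform selection $\varrho(t)=1/(t\log(R/r))$. The reasoning in Lemma~\ref{lem:modest} actually yields
$$
\M(\Gamma_{f(\es)})\le\int_\es\varrho(|x-x_0|)^n\, D_f(x,x_0)\,dm_n(x)
$$
for \emph{any} nonnegative Borel $\varrho$ on $[r,R]$ with $\int_r^R\varrho(t)\,dt=1$: the admissibility verification for $\varrho^*(y)=\varrho(|x-x_0|)/\ell_f(x,x_0)$ at $y=f(x)$, the curve-length computation, and the application of the Lusin $(N)$-property all go through verbatim. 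So the first step is to carry that proof through in this generality.

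The second step is to pass to spherical coordinates around $x_0$. Writing $x=x_0+tz$ with $t\in[r,R]$ and $z\in\Sp$, so that $dm_n(x)=t^{n-1}\,dt\,d\sigma_0(z)$, the inner spherical integral of $D_f(x_0+tz,x_0)$ is exactly $(\omega_{n-1}/2)\Psi_D(t,x_0)$ by \eqref{eq:Psi}, giving
$$
\M(\Gamma_{f(\es)})\le\frac{\omega_{n-1}}{2}\int_r^R\varrho(t)^n\,t^{n-1}\,\Psi_D(t,x_0)\,dt.
$$

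The third step is to minimize this upper bound over admissible $\varrho$. Set
$$
I=\int_r^R\frac{dt}{t\,\Psi_D(t,x_0)^{1/(n-1)}};
$$
if $I=+\infty$, the claim \eqref{modintbound} is vacuous. Otherwise, choosing $\varrho(t)=1/\bigl(I\,t\,\Psi_D(t,x_0)^{1/(n-1)}\bigr)$ normalizes $\int_r^R\varrho\,dt=1$ and reduces the right-hand integral to $I^{-(n-1)}$; equivalently, H\"older's inequality with exponents $n$ and $n/(n-1)$ applied to $1=\int_r^R\varrho\,dt$ gives the same bound for any admissible $\varrho$. Inserting this into the first definition in \eqref{eq:Ziem} yields $\mo f(\es)\ge I$, which is \eqref{modintbound}.

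I expect the only delicate point to be the well-definedness of the optimizing density on the exceptional set where $\Psi_D(\cdot,x_0)$ vanishes or blows up. Measurability of $t\mapsto\Psi_D(t,x_0)$ follows from Fubini applied to $D_f\in L^1_{\rm loc}$ (guaranteed by the integrability of $L_f$ together with \eqref{eq.2.2}), and positivity a.e.\ from $D_f\ge 1/K_f>0$ a.e.\ on the regular set. Any pathology on a null set is absorbed by a standard truncation: replace $\Psi_D$ by $\max(\Psi_D,\varepsilon)$ on $[r,R]$, run the argument, and let $\varepsilon\to 0+$ by monotone convergence.
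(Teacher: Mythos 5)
Your proposal is correct and follows essentially the same route as the paper: the paper likewise observes that the admissibility argument from Lemma~\ref{lem:modest} yields \eqref{uppbdir} for an arbitrary normalized radial density, then plugs in exactly your optimizing choice $\varrho(t)=\bigl(t\,\Psi_D(t,x_0)^{1/(n-1)}\,I\bigr)^{-1}$, integrates in polar coordinates to get $\M(\Gamma_{f(\es)})\le(\omega_{n-1}/2)I^{1-n}$, and concludes via the first definition in \eqref{eq:Ziem}. Your added remarks on the vacuous case $I=+\infty$ and on the measurability and a.e.\ positivity of $\Psi_D(\cdot,x_0)$ are sensible housekeeping that the paper leaves implicit.
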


\begin{proof}
Arguing as in the beginning of the proof of Lemma~\ref{lem:modest}, we obtain
\begin{equation}\label{uppbdir}
\M(f(\Gamma_\es))\,\le\,\int\limits_\es \varrho(|x-x_0|)^n D_f(x,x_0) dm_n(x)\,.
\end{equation}

Since the metric
\begin{equation*}
\varrho(t)\,=\,
\begin{cases}
\displaystyle
\left(t\Psi_D(t,x_0)^{1/(n-1)}\int\limits_r^R\frac{dt}{t\Psi_D(t,x_0)^{1/(n-1)}}\right)^{-1}, &~\text{for } t\in [r,R], \\
0, &~\text{otherwise},
\end{cases}
\end{equation*}
satisfies (\ref{admis1}), the inequality (\ref{uppbdir}) yields
\begin{equation*}
\M(f(\Gamma_S))\,\le\,\frac{\omega_{n-1}}{2}\left(\int\limits_r^R\frac{dt}{t\Psi_D(t,x_0)^{1/(n-1)}}\right)^{1-n}\,.
\end{equation*}
By the first definition in \eqref{eq:Ziem},
we obtain the desired estimate (\ref{modintbound}).
\end{proof}


\subsection{Dominating factor}
The notion of a dominating factor introduced in \cite[p.~882]{GMSV05} for the planar case, will next be extended to higher dimensions $\mathbb R^n,$ $n\ge 2.$

A real valued function $H\,:\,[0,+\infty)\to\mathbb R^n$ is called a \textit{dominating factor} if both of the following conditions hold:
\begin{enumerate}
  \item $H(t)$ is continuous and strictly increasing in $[t_0, +\infty)$ and $H(t)=H(t_0)$ for all $t\in[0,t_0]$ for some $t_0\ge 0;$
  \item the function $e^{H(t)}$ is convex in $t\in [0,+\infty).$
\end{enumerate}

Note that the convexity of $e^H$ implies that $H(t)\to +\infty$ as $t\to\ +\infty.$ Denote by $H^{-1}$ the inverse of $H.$

A dominating factor $H$ is said to be of \textit{divergence type} if
\begin{equation}\label{divtype}
\int\limits_1^{+\infty}\frac{H(t)\,dt}{t^{n/(n-1)}}\,=\,+\infty\,.
\end{equation}
Otherwise, $H$ is of \textit{convergence type}.

An equivalent condition to (\ref{divtype}) can be written as
\begin{equation}\label{invdivtype}
\int\limits_{\tau_1}^{+\infty}\frac{d\tau}{\left[H^{-1}(\tau)\right]^{1/(n-1)}}\,=\,+\infty\,,
\end{equation}
for sufficiently large number $\tau_1.$

Indeed, by the change of variables $\tau=H(t)$ and integration by parts, we have
\begin{equation*}
\int\limits_{\tau_1}^{\tau_2}\frac{d\tau}{\left[H^{-1}(\tau)\right]^{1/(n-1)}}\,=\,\int\limits_{t_1}^{t_2}\frac{dH(t)}{t^{1/(n-1)}}\,=\,
\frac{H(t_2)}{t_2^{1/(n-1)}}-\frac{H(t_1)}{t_1^{1/(n-1)}}+\frac{1}{n-1}\int\limits_{t_1}^{t_2}\frac{H(t)\,dt}{t^{n/(n-1)}}\,,
\end{equation*}
where $\tau_j=H(t_j),$ $j=1,2.$ Therefore, the implication (\ref{divtype}) $\Rightarrow$ (\ref{invdivtype}) is verified immediately. In order to prove the reverse implication assume, on the contrary, that the integral in (\ref{invdivtype}) is finite, whereas (\ref{divtype}) is fulfilled. This implies that $\lim_{t\to\infty} H(t)/t^{1/(n-1)}=\infty,$ i.e. $H(t)>C t^{1/(n-1)}$ for some $C>0$ and all sufficiently large $t.$ Thus, $H(t)/t^{n/(n-1)}>C/t,$ and we reach a contradiction, since the integral in (\ref{invdivtype}) diverges.

\medskip
The function $H(t)=\gamma t,$ where $\gamma$ is a positive constant, serves as an example of a dominating factor of divergence type. Indeed, $H^{-1}(\tau)=\tau/\gamma,$ and
the indefinite integral can be computed as
\begin{equation*}
\int\limits\frac{d\tau}{\left[H^{-1}(\tau)\right]^{1/(n-1)}}\,=\,
\begin{cases}
\gamma \log\tau, &~n=2, \\
\frac{n-1}{n-2}\gamma^{1/(n-1)}\tau^{(n-2)/(n-1)}, &~n\ge 3.
\end{cases}
\end{equation*}
Therefore,
\begin{equation*}
\int\limits_{\tau_1}^{+\infty}\frac{d\tau}{\left[H^{-1}(\tau)\right]^{1/(n-1)}}\,=\,+\infty\,.
\end{equation*}

The following statement provides a lower bound for the modulus of $f(\es(x_0;r,R))$ in terms of a dominating factor. This lower bound is of independent interest, cf. \cite[Lem~2.22]{GMSV05} for the planar case.

\medskip
\begin{lem}\label{lem:modbounddom}
Let $f:\mathbb H^n\to\mathbb R^n$ be an orientation-preserving homeomorphism satisfying the assumptions of Lemma~\ref{lem:modest} and $x_0\in\partial\mathbb H^n.$
Suppose also that a dominating factor $H$ satisfies
\begin{equation}\label{eq:domcond1}
\int\limits_{\es(x_0;r_0e^{-m},r_0)} e^{H(D_f(x,x_0))}\,dm(x)\,\le\,M\,.
\end{equation}
Then
\begin{equation}\label{eq:modintbound}
\mo f(\es(x_0;r_0e^{-m},r_0))\,\ge\,\int\limits^m_{1/n}\frac{dt}{\left[H^{-1}\left(nt+\log\frac{2nM}{\omega_{n-1}r_0^n}\right)\right]^{1/(n-1)}}\,.
\end{equation}
\end{lem}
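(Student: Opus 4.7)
The plan is to reduce the problem to a one-dimensional radial integral via Lemma~\ref{lem:modbound}, use the convexity of $e^H$ through Jensen's inequality to bound $\Psi_D$ in terms of an average of $e^{H(D_f)}$, and then extract the desired lower bound via a Chebyshev-plus-rearrangement argument that converts the \emph{integral} hypothesis \eqref{eq:domcond1} into a useful \emph{pointwise} bound after discarding a set of measure at most $1/n$.

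First, I would apply Lemma~\ref{lem:modbound} to the semiring $\es(x_0;r_0 e^{-m}, r_0)$ and make the change of variables $s = \log(r_0/t)$, obtaining
\begin{equation*}
\mo f(\es)\,\ge\,\int_0^m \frac{ds}{\psi(s)^{1/(n-1)}},
\qquad \psi(s):=\Psi_D(r_0 e^{-s}, x_0),
\end{equation*}
and rewrite the hypothesis \eqref{eq:domcond1} in polar coordinates centered at $x_0$, again using the same substitution, as
\begin{equation*}
\int_0^m \tilde F(s)\, e^{-ns}\,ds\,\le\,\frac{M}{r_0^n},\qquad
\tilde F(s)\,:=\,\int_{\Sp} e^{H(D_f(x_0 + r_0 e^{-s} z, x_0))}\,d\sigma_0(z).
\end{equation*}
Applying Jensen's inequality to the convex function $e^H$ against the normalized probability measure $(2/\omega_{n-1})\,d\sigma_0$ on $\Sp$ gives $e^{H(\psi(s))}\le 2\tilde F(s)/\omega_{n-1}$. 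Setting $\tilde\psi(s) := \log\bigl(2\tilde F(s)/\omega_{n-1}\bigr)$ and using that $H^{-1}$ is nondecreasing, one arrives at $\psi(s) \le H^{-1}(\tilde\psi(s))$, so
\begin{equation*}
\mo f(\es)\,\ge\,\int_0^m \frac{ds}{H^{-1}(\tilde\psi(s))^{1/(n-1)}},\qquad
\int_0^m e^{\tilde\psi(s)-ns}\,ds \,\le\,\frac{2M}{\omega_{n-1}r_0^n}.
\end{equation*}

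Next, set $\alpha := \log\bigl(2nM/(\omega_{n-1} r_0^n)\bigr)$, so that the integral constraint becomes $\int_0^m e^{\tilde\psi(s)-ns}\,ds \le e^\alpha/n$. On the exceptional set $E := \{s\in[0,m] : \tilde\psi(s) > ns + \alpha\}$ the integrand exceeds $e^\alpha$, and Chebyshev's inequality forces $|E| \le 1/n$. Off $E$ one has $\tilde\psi(s) \le ns + \alpha$, and the monotonicity of $H^{-1}$ yields
\begin{equation*}
\int_0^m \frac{ds}{H^{-1}(\tilde\psi(s))^{1/(n-1)}}\,\ge\,\int_{[0,m]\setminus E}\frac{ds}{H^{-1}(ns+\alpha)^{1/(n-1)}}.
\end{equation*}
Since $s \mapsto H^{-1}(ns+\alpha)^{-1/(n-1)}$ is nonincreasing, its integral over any subset of $[0,m]$ of measure at least $m - 1/n$ is minimized by the right subinterval $[1/n, m]$, giving the claimed lower bound.

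The main obstacle is the last step. The hypothesis only controls $\tilde\psi$ on average, so there is no uniform pointwise majorant; Chebyshev's inequality trades the $L^1$ control for a pointwise majorant off a set of measure at most $1/n$, and the rearrangement step handles the worst possible placement of this bad set. Tuning the threshold $\alpha$ so that this measure is exactly $1/n$ is precisely what produces the constant $2nM/(\omega_{n-1} r_0^n)$ inside $H^{-1}$ in the stated bound.
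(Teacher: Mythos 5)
Your proposal is correct and follows essentially the same route as the paper's proof: Lemma~\ref{lem:modbound} plus the logarithmic change of variables, Jensen's inequality for the convex function $e^{H}$ to pass from $D_f$ to $\Psi_D$, a Chebyshev estimate showing the exceptional set has measure at most $1/n$ (your set $E$ coincides with the paper's set $T$ after unwinding the notation, since $h(r_0e^{-s})=r_0^n e^{\tilde\psi(s)-ns}$), and the monotone-rearrangement step that pushes the bad set to the left endpoint. The only cosmetic difference is that the paper keeps a free threshold $L$ and optimizes it to $2nM/\omega_{n-1}$ at the end, whereas you fix $\alpha$ from the start.
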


\begin{proof}
Denoting
\begin{equation*}
h(r)\,=\,\frac{2r^n}{\omega_{n-1}}\int\limits_{\Sp} e^{H(D_f(x_0+rz,x_0))}\, d\sigma_0(z)\,,
\end{equation*}
one can rewrite (\ref{eq:domcond1}) in a form
\begin{equation*}
\int\limits_0^m h(r_0 e^{-t})\,dt\,\le\,\frac{2M}{\omega_{n-1}}\,.
\end{equation*}

Similarly to the proof of \cite[Lem~2.22]{GMSV05}, let $T=\{t\in(0,m):h(r_0 e^{-t})>L\}$ for some $L>0.$ Then the length of $T$ cannot exceed $2M/(\omega_{n-1}L).$

Since $e^H$ is a convex function, Jensen's inequality implies
\begin{equation*}
e^{H(\Psi_D(r,x_0))}\,\le\,\frac{2}{\omega_{n-1}}\int\limits_{\Sp} e^{H(D_f(x_0+rz,x_0))}\,d\sigma_0(z)\,=\,\frac{h(r)}{r^n}\,.
\end{equation*}
where $\Psi_D$ is defined in (\ref{eq:Psi}). This yields,
\begin{equation*}
\Psi_D(r_0 e^{-t},x_0)\,\le\,H^{-1}\left(nt+\log\frac{L}{r_0^n}\right)\qquad\text{for}\quad t\in(0,m)\setminus T\,.
\end{equation*}

Now by Lemma~\ref{lem:modbound} and the last upper bound,
\begin{equation*}
\begin{split}
\mo f(\es)\,&\ge\,\int\limits^{r_0}_{r_0 e^-m}\frac{dr}{r\Psi_D(r,x_0)^{1/(n-1)}}\,=\,
\int\limits^m_0\frac{dt}{\Psi_D(r_0e^{-t},x_0)^{1/(n-1)}}\\&\ge\,\int\limits_{(0,m)\setminus T}\frac{dt}{\left[H^{-1}\left(nt+\log\frac{L}{r_0^n}\right)\right]^{1/(n-1)}}\,
\ge\,\int\limits_{\frac{2M}{\omega_{n-1}L}}^m\frac{dt}{\left[H^{-1}\left(nt+\log\frac{L}{r_0^n}\right)\right]^{1/(n-1)}}\,
\end{split}
\end{equation*}
and setting finally $L=2nM/\omega_{n-1},$ we obtain the desired bound (\ref{eq:modintbound}).
\end{proof}


\section{Boundary correspondence of mappings with finite directional dilatations}\label{Boundext}

In this section we prove results about extending a mapping $f:\mathbb H^n\to \mathbb H^n$ of finite directional dilatations continuously to the boundary.
Moreover, we study the modulus of continuity of the extended mapping in the cases when the extended mapping is Lipschitz or weakly H\"older continuous. Almost all our results rely on applying the Dini condition $\int_0^1 \omega_f(t)/t\,dt<\infty.$ This approach has been utilized for the case when $\omega_f(t)\ge 0$ measures the difference between inner/outer dilatation and 1; e.g. \cite{BGMV03, GMRV98, Resh94}. We apply the Dini condition for essentially weaker cases when $\omega_f(t)=D_f(x,x_0)-1,$ therefore, $\omega_f(t)$ may be negative.


\subsection{Homeomorphic extension to the boundary}
Here we present the main result in our manuscript.
Recall that $\es(x_0; r,R)=\{x\in\mathbb H^n: r\le |x-x_0|\le R\}$
for $x_0\in\partial\mathbb H^n, 0<r<R<+\infty.$

\medskip

\begin{thm}\label{thm:main}
Let $f:\mathbb H^n\to \mathbb H^n$ be a homeomorphism satisfying the assumptions of Lemma~\ref{lem:modest} and $D$ be a domain in the hyperplane $x_n=0.$ Suppose that the both integrals
\begin{equation}\label{eq:main}
\int\limits_{\es(t;r,R)}\frac{T_f(x,t)-1}{|x-t|^n}\,dm(x)\,,\quad
\int\limits_{\es(t;r,R)}\frac{D_f(x,t)-1}{|x-t|^n}\,dm(x)
\end{equation}
are finite for each $t\in D.$ Then $f$ extends to a homeomorphism of $\mathbb H^n\cup D.$
\end{thm}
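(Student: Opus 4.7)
The plan is to combine the lower modulus estimate in \eqref{eq2est} with a half-space version of Proposition~\ref{prop:lim} to extend $f$ continuously at each $t \in D,$ and then to use the $T_f$ hypothesis—symmetrically, for $f^{-1}$—to promote the continuous extension to a homeomorphism. For each $t \in D,$ choose $R > 0$ small enough that $\overline{B(t, R)} \cap \partial \mathbb{H}^n \subset D;$ by hypothesis the quantity
$I(t) := \int_{\mathcal{S}(t;\, 0^+, R)} (D_f(x,t) - 1)/|x - t|^n\, dm(x)$
is finite. For $0 < r < R,$ either $\mo f(\mathcal{S}(t; r, R)) > \log(R/r)$—in which case divergence as $r \to 0^+$ is immediate—or $\mo \mathcal{S} \ge \mo f(\mathcal{S}),$ and the second inequality in \eqref{eq2est} gives
\begin{equation*}
\mo f(\mathcal{S}(t; r, R)) \,\ge\, \log(R/r) \,-\, \frac{2\, I(t)}{\omega_{n-1}} \,\longrightarrow\, +\infty \qquad (r \to 0^+).
\end{equation*}

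A M\"obius transformation carrying $\mathbb{B}^n$ onto $\mathbb{H}^n$ sends each Apollonian semiring $\mathcal{T}(\xi; r_0, r_1)$ onto a round semiring $\mathcal{S}(t; r_0', r_1')$ (with suitably transformed radii) and preserves moduli, so Proposition~\ref{prop:lim} yields its natural half-space counterpart. Applying that counterpart at every $t \in D$ produces a continuous extension $\tilde f : \mathbb{H}^n \cup D \to \overline{\mathbb{H}^n}.$ The image $\tilde f(D)$ lies in $\partial \mathbb{H}^n:$ if $\tilde f(t) = y \in \mathbb{H}^n,$ continuity of $f^{-1} : \mathbb{H}^n \to \mathbb{H}^n$ would force $x_k = f^{-1}(f(x_k)) \to f^{-1}(y) \in \mathbb{H}^n$ whenever $x_k \to t$ with $f(x_k) \to y,$ contradicting $t \in \partial \mathbb{H}^n.$

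To upgrade $\tilde f$ to a homeomorphism onto $\mathbb{H}^n \cup \tilde f(D),$ the hitherto unused $T_f$ condition enters through the standard duality $T_f(x, t) \leftrightarrow D_g(f(x), f(t))$ between the normal dilatation of $f$ and the angular dilatation of $g := f^{-1}$ at corresponding regular points. A change of variables then translates the finiteness of $\int_{\mathcal S(t;\,\cdot,\,\cdot)} (T_f(x, t) - 1)/|x - t|^n\, dm(x)$ into the corresponding finiteness of $\int_{\mathcal S(\tilde f(t);\,\cdot,\,\cdot)} (D_g(y, \tilde f(t)) - 1)/|y - \tilde f(t)|^n\, dm(y);$ because the Sobolev and Lusin $(N), (N^{-1})$ hypotheses of Lemma~\ref{lem:modest} are formally symmetric in $f$ and $g,$ running the preceding three steps for $g$ produces a continuous extension $\tilde g : \mathbb{H}^n \cup \tilde f(D) \to \overline{\mathbb{H}^n}$ with $\tilde g(\tilde f(D)) \subset \partial \mathbb{H}^n.$ Since $\tilde g \circ \tilde f = \mathrm{id}$ already holds on the dense subset $\mathbb{H}^n,$ continuity extends it to $\mathbb{H}^n \cup D,$ and symmetrically for $\tilde f \circ \tilde g;$ thus $\tilde f$ is a homeomorphism of $\mathbb{H}^n \cup D.$ I expect the main obstacle to lie here: pinning down the pointwise duality between $T_f$ and $D_{f^{-1}}$ at regular points, validating the associated change of variables under only the borderline regularity $W^{1, n-1}_{\mathrm{loc}}$ (supported by Lemma~\ref{lem:Teng}), and checking that the Sobolev and Lusin hypotheses of Lemma~\ref{lem:modest} transfer from $f$ to $f^{-1}$ with the ingredients available here.
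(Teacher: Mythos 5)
Your first two steps reproduce the paper's own argument: the dichotomy between $\mo\es\ge\mo f(\es)$ and its negation, the lower bound $\mo f(\es)\ge\log(R/r)-\tfrac{2}{\omega_{n-1}}\int_{\es}(D_f-1)|x-t|^{-n}dm$ from the second inequality in \eqref{eq2est}, and the reduction to Proposition~\ref{prop:lim} by a modulus-preserving M\"obius map. This part is fine (with the minor caveat that, since $D_f-1$ may be negative, you need the integral over $\es(t;r,R)$ to be bounded above uniformly in $r$, which holds e.g.\ when the improper integral over $\es(t;0^+,R)$ converges absolutely).

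The third step, however, rests on a duality that does not exist in the form you state. Writing $g=f^{-1}$, $y=f(x)$, $u=(x-x_0)/|x-x_0|$ and $v=(y-y_0)/|y-y_0|$, a direct computation with $g'(y)=f'(x)^{-1}$ gives $\ell_g(y,y_0)=1/\max_{|h|=1}|\partial_hf(x)\cdot v|$, hence $D_g(y,y_0)=\bigl(\max_{|h|=1}|\partial_hf(x)\cdot v|\bigr)^n/J_f(x)$, whereas $T_f(x,x_0)^{n-1}=\bigl(\max_{|h|=1}|\partial_hf(x)|\,|h\cdot u|\bigr)^n/J_f(x)$: the two extremal problems involve different directions (the image radial direction $v$ versus the domain radial direction $u$) and the exponents do not match, so there is no pointwise identity $T_f(x,t)\leftrightarrow D_g(f(x),f(t))$. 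Even if some comparison held, the proposed change of variables fails: $f(\es(t;r,R))$ is not a round semiring centered at $\tilde f(t)$, $dm(y)=J_f(x)\,dm(x)$, and $|y-\tilde f(t)|$ is not controlled by $|x-t|$, which is exactly the information one lacks; so finiteness of the $T_f$-integral does not transfer to the $D_g$-integral over $\es(\tilde f(t);\cdot,\cdot)$. The detour through $f^{-1}$ is in any case unnecessary. Injectivity of the extension on $D$ follows from the divergence $\mo f(\es(t;r,R))\to+\infty$ alone, as in Theorem~\ref{thm:exten}: if $\tilde f(t_1)=\tilde f(t_2)$ with $t_1\ne t_2$ in $D$, choose $R<|t_1-t_2|$; then the two complementary components of $f(\es(t_1;r,R))$ contain points arbitrarily close to the common value $\tilde f(t_1)=\tilde f(t_2)$, so their distance is zero and the half-space analogue of Lemma~\ref{lem:sep} forces $\mo f(\es(t_1;r,R))=0$, a contradiction. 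Openness of the extended map (hence continuity of its inverse) then follows from invariance of domain. This is also consistent with the paper's proof, which never uses the $T_f$-hypothesis.
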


\begin{proof}
Assume first that $\mo \es \ge \mo f(\es),$ then by (\ref{eq2est}), $\mo f(\es)\to +\infty$ as $r\to 0^+,$ since $\mo \es\to +\infty$ as $r\to 0^+.$

In the case $\mo \es < \mo f(\es),$ the same conclusion $\mo f(\es)\to +\infty$ as $r\to 0^+$ is trivial. Thus, applying a M\"obius transformation from $\mathbb H^n$ onto $\mathbb B^n$ and its inverse which preserve the modulus and then the assertion of Proposition~\ref{prop:lim}, one concludes that $f$ can be extended to a homeomorphism of $\mathbb H^n\cup D$ into $\overline{\mathbb H}^n.$
\end{proof}


\subsection{Distortion of semirings}
The following result is a counterpart of Theorem~\ref{thm:sep} for the case of $\mathbb H^n.$ In addition, the estimates of this type will be applied to studying the regularity features of mappings on the boundary. For the planar case, see \cite[Theorem~2.7]{GSS13}.

\begin{thm}\label{thm:estim}
Let $\es$ be a properly embedded semiring in $\mathbb H^n$
and $V_0$ and $V_1$ be the two connected components of $\mathbb H^n\setminus\es$ bounded and unbounded, respectively.
If, in addition, $\mo\es > A_n,$ then for any point $x_0\in\partial\mathbb H^n\cap V_0,$
\begin{equation}\label{eq:estim}
\sup\limits_{y\in V_0}|y-x_0|\,\le\, C {\rm dist}\,(x_0,V_1)e^{-\mo\es}\,,
\end{equation}
where $C=\exp{A_n}.$
\end{thm}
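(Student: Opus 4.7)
\smallskip
\noindent\textbf{Proof plan.}
The plan is to reduce Theorem~\ref{thm:estim} to the ring version Theorem~\ref{thm:teich} by doubling the semiring $\es$ across the hyperplane $\partial\Hn,$ in the same spirit as the proof of Theorem~\ref{thm:sep} for the ball. First, I would invoke an $\Hn$-analogue of Lemma~\ref{lem:sep} (obtained by conjugating that lemma with a M\"obius map $\B^n\to\Hn$ which preserves moduli) to form the Euclidean reflection
\[
\hat\es\,:=\,\Int\es\cup U\cup\Int\es^*,\qquad U\,=\,\partial\Hn\setminus(\overline{V_0}\cup\overline{V_1});
\]
under the hypothesis $\mo\es>A_n>0,$ this $\hat\es$ will be a genuine ring in $\Sn$ with $\mo\hat\es=\mo\es.$ Its two complementary components in $\Sn$ are the doubles $\hat V_0$ and $\hat V_1$ of $V_0$ and $V_1;$ the first is bounded and will contain $x_0$ as an interior point of $\Sn$ (because $x_0\in\partial\Hn\cap\overline{V_0}$ is identified with itself under the reflection), while the second contains $\infty.$

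Next, since $\hat\es$ separates $x_0$ from $\infty$ in $\Sn$ and $\mo\hat\es>A_n,$ Theorem~\ref{thm:teich} will produce an annular ring
\[
\A\,=\,\{y\in\R^n:\rho_0<|y-x_0|<\rho_1\}\,\subset\,\hat\es
\]
centered at $x_0$ with $\log(\rho_1/\rho_0)=\mo\A\ge\mo\es-A_n.$ Because $\A\subset\hat\es$ separates $\hat V_0$ from $\hat V_1,$ a standard connectedness argument gives $\hat V_0\subset\{y\in\R^n:|y-x_0|\le\rho_0\}$ and $\hat V_1\subset\{y\in\Sn:|y-x_0|\ge\rho_1\}.$ Combining these with $V_0\subset\hat V_0$ and $V_1\subset\hat V_1,$ I will then obtain
\[
\sup_{y\in V_0}|y-x_0|\,\le\,\rho_0\,=\,\rho_1\,e^{-\mo\A}\,\le\,\dist(x_0,V_1)\,e^{-(\mo\es-A_n)}\,=\,e^{A_n}\,\dist(x_0,V_1)\,e^{-\mo\es},
\]
which is precisely \eqref{eq:estim} with $C=e^{A_n}.$

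The main technical hurdle will be the doubling step itself, since Lemma~\ref{lem:sep} is stated for $\B^n$ rather than $\Hn.$ One has to verify that $\mo\es>A_n>0$ ensures the $\Hn$-version of Lemma~\ref{lem:sep} yields a legitimate ring $\hat\es\subset\Sn,$ and, crucially, that $x_0\in\partial\Hn\cap\overline{V_0}$ indeed becomes an interior point of the bounded Euclidean component $\hat V_0,$ so that Theorem~\ref{thm:teich} can be applied at the distinguished point $x_0.$ Once these items are settled, the remainder of the argument is routine geometric bookkeeping.
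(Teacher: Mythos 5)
Your proposal is correct and follows essentially the same route as the paper: double $\es$ across $\partial\Hn$ (the half-space analogue of Lemma~\ref{lem:sep}), apply Theorem~\ref{thm:teich} to the resulting ring $\hat\es$ to extract an annular subring $\A(x_0;\rho_0,\rho_1)$ with $\mo\A\ge\mo\es-A_n,$ and conclude by the containments $V_0\subset\{|y-x_0|\le\rho_0\}$ and $V_1\subset\{|y-x_0|\ge\rho_1\}.$ Your write-up is in fact more careful than the paper's two-line argument (which, incidentally, cites Theorem~\ref{thm:main} where Theorem~\ref{thm:teich} is clearly intended), particularly in flagging that $x_0$ must lie in the bounded complementary component of $\hat\es$ for the Teichm\"uller theorem to apply.
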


\begin{proof}
Arguing in the same way as in Lemma~\ref{lem:sep} and applying Theorem~\ref{thm:main} to the symmetrically extended ring $\hat{\es},$ there exists a annular ring $\A=\A(x_0;r,R),$ which is a subring of $\hat{\es} $ such that $\mo \A\ge \mo \es -A_n.$ Now, since $\sup_{y\in V_0}|y-x_0|\le r,$ ${\rm dist}\,(x_0,V_1)\ge R,$ and $\mo \A =R/r,$ we obtain the desired estimate (\ref{eq:estim}).
\end{proof}


\subsection{Lipschitz continuity at the boundary}
The uniform boundedness of the second integral in (\ref{eq:main}) provides a local Lipschitz continuity at the boundary. Recall that a mapping $f$ is called locally Lipschitz continuous on a domain $G,$ if for every compact subset $E$ of $G$ there exists a constant $C=C(E)$ such that $|f(x)-f(x_0)|\le C |x-x_0|$ for any $x, x_0\in E.$ The next result is similar to the earlier theorem for quasiconformal automorphisms of $\mathbb B^n$ normalized by $f(0)=0$ in \cite{GG09}.

\begin{thm}\label{thm:mainLip}
Let $f:\mathbb H^n\to \mathbb H^n$ be a homeomorphism satisfying the assumptions of Theorem~\ref{thm:main} and $D$ be a domain in the hyperplane $x_n=0.$ Suppose that there exist $R>0$ and $M>0$ such that
\begin{equation*}
\int\limits_{S(t;r,R)}\frac{D_f(x,t)-1}{|x-t|^n}\,dm_n(x)\,\le\, M
\end{equation*}
for every $t\in D$ and $r,$ $0<r<R,$ then $f$ extends to a homeomorphism of $\mathbb H^n\cup D$ into $\overline{\mathbb H}^n.$ If $f(D)\subset \partial\mathbb H^n,$ the boundary mapping $f:D\to\partial\mathbb H^n$ is locally Lipschitz continuous.
\end{thm}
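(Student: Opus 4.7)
The first conclusion, that $f$ extends to a homeomorphism of $\mathbb H^n\cup D$ into $\overline{\mathbb H}^n$, is immediate from Theorem~\ref{thm:main}, whose hypotheses are contained in the present standing assumptions. The plan for the Lipschitz claim is to combine the modulus-distortion inequality~(\ref{eq2est})---which turns the uniform bound on the integral of $D_f-1$ into a lower bound on $\mo f(\es)$---with the separation estimate of Theorem~\ref{thm:estim}, which converts that into an upper bound on the Euclidean extent around $f(x_0)$ of the bounded component of $\mathbb H^n\setminus f(\es)$, a set that must contain $f(y)$.

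Concretely, I would fix a compact set $E\subset D$ and shrink $R$, if necessary, to a radius $R_0\le R$ small enough that the closed half-ball $\overline B(x_0,R_0)\cap\overline{\mathbb H}^n$ has its flat boundary in $D$ for every $x_0\in E$. For $x_0,y\in E$ with $r=|y-x_0|<R_0$ I would apply~(\ref{eq2est}) to the semiring $\es=\es(x_0;r,R_0)$. The hypothesized bound $M$ over the larger $\es(x_0;r,R)$, corrected by the easy estimate
\[
\int_{\es(x_0;R_0,R)}\frac{|D_f-1|}{|x-x_0|^n}\,dm_n\,\le\,R_0^{-n}\int_K|D_f-1|\,dm_n
\]
on the fixed compact shell $K=\bigcup_{x_0\in E}\overline{\es(x_0;R_0,R)}$ (finite by local integrability of $L_f\ge D_f$), yields a uniform constant $M'=M'(E)$ such that the integral of $(D_f-1)/|x-x_0|^n$ over $\es$ is at most $M'$. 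Then~(\ref{eq2est})---or a trivial comparison when $\mo\es<\mo f(\es)$---gives
\[
\mo f(\es)\,\ge\,\log(R_0/r)-2M'/\omega_{n-1}.
\]

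Next I would apply Theorem~\ref{thm:estim} to $f(\es)$. The choice of $R_0$ makes the extended $f$ continuous on the compact set $\overline B(x_0,R_0)\cap\overline{\mathbb H}^n$, so $f(\overline B(x_0,R_0)\cap\mathbb H^n)$ is bounded; hence the component $V_0=f(\{|x-x_0|<r\}\cap\mathbb H^n)$ of $\mathbb H^n\setminus f(\es)$ is genuinely bounded and its counterpart $V_1$ genuinely unbounded, which is what Theorem~\ref{thm:estim} requires. Since $f(y)\in\overline{V_0}$, the separation estimate yields, once $r$ is small enough that $\mo f(\es)>A_n$,
\[
|f(y)-f(x_0)|\,\le\,e^{A_n+2M'/\omega_{n-1}}\,\dist(f(x_0),V_1)\,\frac{|y-x_0|}{R_0}.
\]
A uniform bound on $\dist(f(x_0),V_1)$ comes from the test point $x_0+2R_0 e_n\in V_1$ together with continuity of $f$ on the compact set $E\cup\{x+2R_0 e_n:x\in E\}$. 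For the complementary range of $|y-x_0|$ a trivial $\diam f(E)$-bound suffices, and together these give local Lipschitz continuity on $E$.

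The main obstacle is verifying the geometric hypothesis of Theorem~\ref{thm:estim}---the unboundedness of $V_1$ and the uniform control of $\dist(f(x_0),V_1)$---which is not automatic for a plain homeomorphism $f:\mathbb H^n\to\mathbb H^n$. Both issues are resolved by the reduction from the hypothesis radius $R$ to the smaller $R_0=R_0(E,D)$, which puts the relevant closed half-ball into the domain $\mathbb H^n\cup D$ of the continuous extension and restores the needed compactness. The resulting correction from $M$ to $M'$ in the modulus estimate is harmless because $|D_f-1|$ is locally integrable and $|x-x_0|$ is bounded below by $R_0$ on the outer shell.
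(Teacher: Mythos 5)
Your proposal is correct and takes essentially the same route as the paper: feed the uniform integral bound into \eqref{eq2est} to obtain $\mo f(\es)\ge \log(R/r)-2M/\omega_{n-1}$ (with the trivial comparison in the case $\mo\es<\mo f(\es)$), then apply Theorem~\ref{thm:estim} to $f(\es)$ and bound $\dist(f(t),V_1)$ via a reference point lying in $V_1$. Your extra care in shrinking $R$ to $R_0$ and in uniformly bounding $\dist(f(t),V_1)$ over a compact subset of $D$ only makes explicit details that the paper's proof leaves implicit.
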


\begin{proof}
The scheme of the proof follows the lines of the proof of Theorem~1.2 in \cite{GSS13} given for the planar case.
Note that the existence of boundary extension of $f$ to $D$ follows from Theorem~\ref{thm:main}.
Pick any point $x_0=(x_{01},\ldots,x_{0n})$ satisfying $x_{0n}>R$ and write $y_0=f(x_0).$

First assume that $\mo \es\ge \mo f(\es).$ Then by (\ref{eq2est})
\begin{equation}\label{eq:mainLip1}
\mo f(\es)\,\ge\, \mo \es -\frac{2M}{\omega_{n-1}}\,.
\end{equation}
An appropriate choice of $r_0,$ $\log(R/r_0)-2M/\omega_{n-1}>A_n$ or, equivalently,
\begin{equation*}
r_0\,<\,Re^{-A_n-2M/\omega_{n-1}}\,,
\end{equation*}
allows to conclude that $\mo f(\es) > A_n,$ and, therefore, to apply Theorem~\ref{thm:estim}. Note that $f(\es)$ separates $y_0\in \mathbb H^n$ from $f(t)\in\partial\mathbb H^n,$ thus
${\rm dist}\,(f(t), V_1)\le |f(t)-y_0|.$ Here $V_1$ is the unbounded component of the complement $f(\es)$ in $\mathbb H^n.$ Choose an arbitrary point $x\in \mathbb H^n$ with $|x-t|<r_0$ and set $r=|x-t|.$ Combining (\ref{eq:estim}) with (\ref{eq:mainLip1}) we obtain
\begin{equation*}
|f(x)-f(t)|\,\le\, C {\rm dist}\,(f(t), V_1) e^{-\mo f(\es)}\,\le\, C_1 |f(t)-y_0| |x-t|\,,
\end{equation*}
where $C_1=e^{A_n+2M/\omega_{n-1}}/R.$

In the case $\mo \es < \mo f(\es),$ (\ref{eq:estim}) directly yields
\begin{equation*}
|f(x)-f(t)|\,\le\, C {\rm dist}\,(f(t), V_1) e^{-\mo f(\es)}\,\le\, C_2 |f(t)-y_0| |x-t|\,.
\end{equation*}
Here $C_2=e^{A_n}/R.$ Thus, the mapping $f$ is locally Lipschitz continuous on $D.$
\end{proof}


\subsection{Weak H\"older continuity at the boundary}
The finiteness of the integral average of $(D_f(x,t)-1)/|x-t|^n$ over a half ball centered at $t\in\partial\mathbb H^n$ guarantees a weak H\"older continuity of a self-mapping of $\mathbb H^n$ with finite directional dilatations  up to the boundary. By a weak H\"older continuity with exponent $\alpha$ of a mapping $f$ in a domain $G,$ we mean that there exists a constant $C$ for every $\gamma,$ $0<\gamma<\alpha,$ such that the inequality $|f(x)-f(x_0)|\le C |x-x_0|^\gamma$ holds for any $x, x_0\in G.$ For the same property of quasiconformal automorphisms of $\mathbb B^n$ we refer again to \cite{GG09}.

\begin{thm}\label{thm:mainHol}
Let $f:\mathbb H^n\to \mathbb H^n$ be a homeomorphism satisfying the assumptions of Theorem~\ref{thm:main} and $D$ be a domain in the hyperplane $x_n=0.$ Suppose that for  $0<\alpha\le 1$ we have
\begin{equation}\label{eq:mainHol}
\limsup_{R\to 0^+}\frac{2}{\Omega_n R^n}\int\limits_{\es(t;0,R)}\left(D_f(x,t)-1\right)\,dm_n(x)\,\le\, \frac{1}{\alpha^{n-1}}-1
\end{equation}
uniformly for every $t\in D.$ Then $f$ extends to a homeomorphism of $\mathbb H^n\cup D$ into $\overline{\mathbb H}^n,$ and if $f(D)\subset \partial\mathbb H^n,$ the boundary mapping $f:D\to\partial\mathbb H^n$ is locally weakly H\"older continuous on $D$ with exponent $\alpha.$
\end{thm}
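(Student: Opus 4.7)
The continuous extension of $f$ to $\Hn\cup D$ is immediate from Theorem~\ref{thm:main}, whose hypotheses are part of the assumptions; only the weak H\"older estimate at each $t\in D,$ under $f(D)\subset\partial\Hn,$ remains. The plan is to fix $t\in D$ together with a small $R>0,$ apply the multiplicative lower modulus bound in \eqref{eq1est} to the semirings $\es(t;r,R)$ as $r\to 0^+,$ and then invoke the boundary distortion inequality of Theorem~\ref{thm:estim}. The technical heart is converting the half-ball average hypothesis \eqref{eq:mainHol} into an upper bound on the weighted integral $J:=\int_{\es(t;r,R)}D_f(x,t)|x-t|^{-n}\,dm_n(x).$

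Fix $\ep>0.$ Since $D_f\ge 0,$ the hypothesis \eqref{eq:mainHol} rephrases, for $\rho$ sufficiently small (uniformly for $t$ in any compact subset of $D$), as
\[
\tilde F(\rho)\,:=\,\int_{\es(t;0,\rho)}D_f(x,t)\,dm_n(x)\,\le\,\frac{\omega_{n-1}\rho^n}{2n}\,B,\qquad B:=\alpha^{1-n}+\ep.
\]
Polar coordinates give $\tilde F'(\rho)=\rho^{n-1}\int_\Sp D_f(t+\rho z,t)\,d\sigma_0(z),$ hence $J=\int_r^R\tilde F'(\rho)\rho^{-n}\,d\rho.$ Integration by parts, followed by dropping the nonpositive boundary term $-\tilde F(r)/r^n$ and inserting the upper bound on $\tilde F,$ yields
\[
J\,\le\,\frac{\tilde F(R)}{R^n}+n\int_r^R\frac{\tilde F(\rho)}{\rho^{n+1}}\,d\rho\,\le\,\frac{\omega_{n-1}B}{2}\left(\frac1n+\log\frac R r\right).
\]
Substituting into the first inequality of \eqref{eq1est} then produces
\[
\mo f(\es(t;r,R))\,\ge\,B^{-1/(n-1)}\log(R/r)\cdot(1+o(1))\qquad\text{as }r\to 0^+.
\]

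Given $\gamma<\alpha,$ I choose $\ep>0$ so small that $B^{-1/(n-1)}=(\alpha^{1-n}+\ep)^{-1/(n-1)}>\gamma,$ which is possible since $\gamma<\alpha$ is equivalent to $\gamma^{1-n}>\alpha^{1-n}.$ For $r$ sufficiently small, $\mo f(\es(t;r,R))>A_n,$ so Theorem~\ref{thm:estim} applies. I fix once and for all an auxiliary point $x_\ast\in\Hn$ at Euclidean distance greater than $R$ from every $t$ in the compact subset under consideration and set $y_\ast=f(x_\ast);$ then $y_\ast$ lies in the unbounded component $V_1$ of $\Hn\setminus f(\es(t;r,R)),$ so $\dist(f(t),V_1)\le|f(t)-y_\ast|.$ Theorem~\ref{thm:estim} then yields
\[
|f(x)-f(t)|\,\le\,e^{A_n}|f(t)-y_\ast|\,e^{-\mo f(\es(t;r,R))}\,\le\,C(r/R)^\gamma
\]
for every $x\in\overline{\Hn}$ with $|x-t|=r$ small; specializing to $x\in D$ and absorbing the bounded quantity $|f(t)-y_\ast|$ into the constant delivers the local weak H\"older bound with exponent $\alpha.$ The main obstacle I anticipate is the possible sign change of $D_f-1$: this forces the use of the multiplicative \eqref{eq1est} applied to the genuinely nonnegative $D_f,$ since the additive \eqref{eq2est} would both leave an awkwardly-signed boundary term in the integration by parts and, moreover, yield only the inferior exponent $2-\alpha^{1-n},$ which is useless once $\alpha<2^{-1/(n-1)}.$
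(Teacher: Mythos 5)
Your proposal is correct and follows essentially the same route as the paper: both convert the half-ball average hypothesis into an upper bound on the weighted mean $P_f(t;r,R)=\frac{2}{\omega_{n-1}\log(R/r)}\int_{\es(t;r,R)}D_f(x,t)|x-t|^{-n}\,dm_n(x),$ feed this into the first inequality of \eqref{eq1est} to get $\mo f(\es(t;r,R))\ge\gamma\log(R/r)$ for any $\gamma<\alpha$ and small $r,$ and conclude via Theorem~\ref{thm:estim}. Your explicit integration by parts on $\tilde F(\rho)$ is precisely a derivation of the identity \eqref{eq:mainHol1} that the paper instead cites from \cite{GG09}, and your closing remark correctly identifies why the multiplicative bound \eqref{eq1est}, rather than the additive \eqref{eq2est}, is the right tool here.
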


\begin{proof}
Denote
\begin{equation*}
\omega(t;R)\,=\,\frac{2}{\Omega_nR^n}\int\limits_{\es(t;0,R)}\left(D_f(x,t)-1\right)\,dm_n(x)\,=\,\frac{2}{\Omega_nR^n}\int\limits_{\es(t;0,R)}D_f(x,t)\,dm_n(x)-1\,,
\end{equation*}
and
\begin{equation*}
P_f(t;r,R)\,=\,\frac{2}{\omega_{n-1}\log(R/r)}\int\limits_{\es(t;r,R)}\frac{D_f(x,t)}{|x-t|^n}\,dm_n(x)\,.
\end{equation*}
Then arguing similarly to (2.19) in \cite{GG09}, we obtain
\begin{equation}\label{eq:mainHol1}
\left(P_f(t;r,R)-1\right)\log(R/r)\,=\,\frac{\omega(t;R)-\omega(t;r)}{n}+\int\limits_r^R\frac{\omega(t;s)}{s}\,ds\,.
\end{equation}

Note now that the condition (\ref{eq:mainHol}) is equivalent to
\begin{equation*}
\lim\limits_{R\to 0^+} \omega(t;R)\,\le\, \frac{1}{\alpha^{n-1}}-1
\end{equation*}
uniformly for $t\in D.$ Thus, for a compact set $D_0\subset D$ and arbitrary $\gamma,$ $0<\gamma<\alpha,$ there exists $R>0$ such that $\omega(t;s)\le 1/\gamma^{n-1}-1$ for $t\in D_0$ and $0<s\le R;$ cf. \cite[p. 960]{GSS13}. The function $\omega(t;s)$ is bounded from below and from above for $0<s\le R,$ and therefore, we rewrite (\ref{eq:mainHol1}) as
\begin{equation*}
\left(P_f(t;r,R)-1\right)\log(R/r)\,\le\, O(1)+\left(1/\gamma^{n-1}-1\right)\log(R/r)\,,\quad\text{as}\quad r\to 0\,,
\end{equation*}
which implies
\begin{equation*}
P_f(t;r,R)\,\le\, 1/\gamma^{n-1}+o(1),\quad\text{as}\quad r\to 0\,.
\end{equation*}
Then by the first inequality in (\ref{eq1est}),
\begin{equation*}
\frac{\mo f(\es)}{\log(R/r)}\,\ge\, P_f(t;r,R)^{1/(1-n)}\,,
\end{equation*}
and choosing sufficiently small $r_0,$ one can apply Theorem~\ref{thm:estim} and reach the estimate
\begin{equation*}
|f(y)-f(t)|\,\le\, C |y-t|^\gamma
\end{equation*}
for $y\in D,$ $t\in D_0,$ provided that $|y-t|\le r_0.$
\end{proof}


\subsection{Homeomorphic extension to $x_n=0$}
We next prove a counterpart of Theorem~\ref{thm:exten} for the case of the upper half space. The proof follows the ideas  of
\cite[Thm~3.4]{GSS13}.

\begin{lem}\label{lem:exten}
An automorphism $f$ of the upper half space $\mathbb H^n$ admits a homeomorphic extension to $\overline{\mathbb H}^n$ if and only if for each $t\in\partial\mathbb H^n,$
\begin{equation*}
\lim_{r\to 0^+}\mo f(\es(t; r,R))\,=\,+\infty
\end{equation*}
for some $R=R(t)>0.$
\end{lem}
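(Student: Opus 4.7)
The plan is to reduce the statement to Theorem~\ref{thm:exten} via a M\"obius transformation. Fix a M\"obius map $\phi:\overline{\mathbb H}^n\to\overline{\mathbb B}^n$ and set $g=\phi\circ f\circ\phi^{-1}$, an automorphism of $\B^n$. Then $f$ extends to a homeomorphism of $\overline{\mathbb H}^n$ if and only if $g$ extends to one of $\overline{\mathbb B}^n$. Because M\"obius maps preserve the modulus of every properly embedded semiring, we have $\mo g(A)=\mo f(\phi^{-1}(A))$ for each such $A\subset\B^n$. Moreover, the displayed limit is insensitive to the particular choice of $R$: for $0<R_1<R_2$, the super-additivity of moduli for concentric semirings gives $|\mo f(\es(t;r,R_1))-\mo f(\es(t;r,R_2))|\le\mo f(\es(t;R_1,R_2))$, a finite constant independent of $r$.

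The key ingredient is a geometric sandwich between the Euclidean hemispherical semirings $\es(t;\cdot,\cdot)$ near $t\in\partial\Hn$ and the $\phi^{-1}$-pullbacks of the Apollonian semirings $\T(\xi;\cdot,\cdot)$ near $\xi=\phi(t)$. Fix $R>0$ and choose $R'\in(0,1)$ so close to $1$ that the surface $\phi^{-1}\{y\in\B^n:|y-\xi|/|y+\xi|=R'\}$ lies beyond the hemisphere $\{x\in\Hn:|x-t|=R\}$; this is possible because the corresponding Apollonian ball at $\xi$ can be made to contain any prescribed compact neighborhood of $\xi$ in $\overline{\mathbb B}^n$. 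The bi-continuity of $\phi$ at the boundary then supplies, for each sufficiently small $r>0$, continuous strictly increasing functions $r^\pm(r)$ with $r^\pm(r)\to 0$ as $r\to 0^+$ such that
\begin{equation*}
\es(t;r^-(r),R)\ \subset\ \phi^{-1}(\T(\xi;r,R'))\ \subset\ \es(t;r^+(r),R),
\end{equation*}
each being a subsemiring inclusion. By the subring monotonicity $\mo\es'\le\mo\es$ (valid for semirings via the doubling of Lemma~\ref{lem:sep} and ring monotonicity) together with conformal invariance, this yields
\begin{equation*}
\mo f(\es(t;r^-(r),R))\ \le\ \mo g(\T(\xi;r,R'))\ \le\ \mo f(\es(t;r^+(r),R)).
\end{equation*}

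Both directions now follow. For the ``if'' direction, the hypothesis makes $\mo f(\es(t;r^-(r),R))\to+\infty$ as $r\to 0^+$ for each $t$, so the left inequality gives $\mo g(\T(\xi;r,R'))\to+\infty$ for every $\xi\in\partial\B^n$; Theorem~\ref{thm:exten} applied to $g$ then yields a homeomorphic extension of $g$, and conjugating by $\phi^{-1}$ delivers one of $f$. For the ``only if'' direction, a homeomorphic extension of $f$ yields one of $g$, and the corresponding direction of Theorem~\ref{thm:exten} supplies some $R_\xi>0$ with $\mo g(\T(\xi;r,R_\xi))\to+\infty$; taking $R'=R_\xi$ in the right inequality and reparametrizing via $\rho=r^+(r)$ (permissible since $r^+$ is a homeomorphism of a right neighborhood of $0$ onto another), we conclude $\mo f(\es(t;\rho,R))\to+\infty$ as $\rho\to 0^+$. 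The main obstacle, I expect, is the rigorous verification of the subring containments in the sandwich; this rests on the bi-continuous behaviour of $\phi$ at $\partial\Hn$ and the compatibility of the two families of neighborhood bases at a common boundary point (the case $t=\infty$ is handled by first pre-conjugating with an inversion that moves $\infty$ to a finite point of $\partial\Hn$).
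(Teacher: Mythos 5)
Your overall strategy---conjugate by a M\"obius map $\phi:\overline{\Hn}\to\overline{\B}{}^n$ and sandwich the hemispherical semirings $\es(t;\cdot,\cdot)$ between Apollonian semirings at $\xi=\phi(t)$ so that Theorem~\ref{thm:exten} can be invoked---is the natural route (the paper itself gives no proof, deferring to \cite[Thm~3.4]{GSS13}). But two steps do not hold as written. First, your ``$R$-insensitivity'' claim is backwards: the Gr\"otzsch superadditivity for nested semirings gives
$\mo f(\es(t;r,R_2))\ge \mo f(\es(t;r,R_1))+\mo f(\es(t;R_1,R_2))$ for $R_1<R_2$, i.e.\ a \emph{lower} bound on the difference; the upper bound you assert is subadditivity, which is false in general. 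The direction you actually need---that divergence of $\mo f(\es(t;r,R))$ as $r\to0^+$ survives when $R$ is \emph{decreased}---does not follow from monotonicity or superadditivity and requires a separate argument (e.g.\ via Theorem~\ref{thm:sep}: divergence forces $\diam f(\{|x-t|<r\}\cap\Hn)\to0$, after which a Teichm\"uller--Gr\"otzsch lower bound restores divergence for any smaller outer radius). This gap is not cosmetic, because your sandwich cannot hold with the same $R$ on both sides: with a fixed $\phi$, the preimage of an Apollonian sphere at $\xi$ is not a hemisphere centered at $t$, so the two inclusions force two different outer radii, and passing between them is exactly the missing $R$-independence. Moreover, $R'\in(0,1)$ close to $1$ does not work: the Apollonian balls $\{|y-\xi|/|y+\xi|\le R'\}$ with $R'<1$ all lie in the half-ball $\{y:y\cdot\xi>0\}$, so they cannot contain an arbitrary compact neighborhood of $\xi$ in $\overline{\B}{}^n$; and one must also arrange $-\xi\notin\phi(\{|x-t|\le R\})$, which may again force shrinking $R$ below the given $R(t)$.

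Second, Theorem~\ref{thm:exten} demands the modulus condition at \emph{every} $\xi\in\partial\B^n$, including $\xi_0=\phi(\infty)$. The condition at $\xi_0$ translates into divergence of $\mo f$ over shells whose \emph{outer} radius tends to $\infty$, i.e.\ into a hypothesis on the behavior of $f$ near $\infty$---about which the lemma assumes nothing. Your parenthetical fix (``pre-conjugate with an inversion moving $\infty$ to a finite point'') only relocates the problem: the hypothesis is then silent about the conjugated map at the image of $\infty$. The repair is to avoid Theorem~\ref{thm:exten} as a black box and instead run its proof locally: for each $t$ use a M\"obius map $\phi_t$ with $\phi_t(t)=\xi$ and $\phi_t(\infty)=-\xi$, under which $\es(t;r,R)$ is carried \emph{exactly} onto $\T(\xi;cr,cR)$ (so no sandwich is needed), apply Proposition~\ref{prop:lim} to get the continuous extension at each finite boundary point, and then obtain injectivity and bicontinuity of the boundary map from the separation estimate of Theorem~\ref{thm:sep} exactly as in the proof of Theorem~\ref{thm:exten}.
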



\subsection{Modulus of continuity} In this subsection we establish estimates for the modulus of continuity involving a dominating factor of divergence type.
First we present a sufficient condition for the continuous extension to the boundary for self-homeomorphisms of $\mathbb H^n.$

\begin{thm}\label{thm:domfac}
Let $f:\mathbb H^n\to \mathbb H^n$ be a homeomorphism satisfying the assumptions of Lemma~\ref{lem:modest} and $x_0\in\partial \mathbb H^n.$ Suppose that for some positive constants $\gamma$ and $M=M(x_0),$
\begin{equation*}
\int\limits_{\es(x_0;r_0e^{-m},r_0)} e^{\gamma D_f(x,x_0)}\,dm(x)\,\le\,M\,.
\end{equation*}
Then $f$ continuously extends to $x_0.$
\end{thm}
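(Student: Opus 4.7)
The approach is to apply Lemma~\ref{lem:modbounddom} with the linear dominating factor $H(t)=\gamma t$ and conclude that $\mo f(\es(x_0;r,r_0))\to+\infty$ as $r\to 0^+$; continuous extension at $x_0$ then follows from Proposition~\ref{prop:lim} after the same M\"obius transfer $\mathbb H^n\to\mathbb B^n$ already invoked in the proof of Theorem~\ref{thm:main}.

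First I would verify that $H(t)=\gamma t$ is a dominating factor of divergence type: continuity and strict monotonicity on $[0,+\infty)$ (with $t_0=0$) are immediate, convexity of $e^{\gamma t}$ is clear, and the paragraph following~(\ref{invdivtype}) explicitly checks~(\ref{invdivtype}) for this linear $H$ in every dimension $n\ge 2$.

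With this choice of $H$, the hypothesis of the theorem is precisely~(\ref{eq:domcond1}), so Lemma~\ref{lem:modbounddom} gives
\[
\mo f(\es(x_0;r_0e^{-m},r_0))\,\ge\,\gamma^{1/(n-1)}\int_{1/n}^{m}\frac{dt}{(nt+C)^{1/(n-1)}},\qquad C=\log\frac{2nM}{\omega_{n-1}r_0^n}.
\]
Since $1/(n-1)\le 1$ for $n\ge 2$, the integrand is not integrable at infinity, so the right-hand side tends to $+\infty$ with $m$. Writing $r=r_0 e^{-m}$, this says $\mo f(\es(x_0;r,r_0))\to+\infty$ as $r\to 0^+$.

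To finish, I would fix a M\"obius transformation $\Phi:\mathbb H^n\to\mathbb B^n$ with $\Phi(x_0)=\xi\in\partial\mathbb B^n$ and set $g=f\circ\Phi^{-1}$. Since M\"obius maps preserve moduli of semirings, and since $\Phi^{-1}(\T(\xi;r',R'))$ is a semiring in $\mathbb H^n$ containing some $\es(x_0;r,r_0)$ as a subring once $r'$ is sufficiently small, the monotonicity of the modulus under subring inclusion yields $\mo g(\T(\xi;r',R'))\to+\infty$ as $r'\to 0^+$ for a suitable $R'>0$. Proposition~\ref{prop:lim} applied to $g$ gives continuous extension at $\xi$, and pulling back by $\Phi$ gives continuous extension of $f$ at $x_0$. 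The only step I expect to require more than routine bookkeeping is this last geometric matching between the two families $\es(x_0;r,r_0)$ and $\T(\xi;r',R')$; this is the same M\"obius-invariance reduction tacitly used in the proof of Theorem~\ref{thm:main}, and subring monotonicity handles it without difficulty.
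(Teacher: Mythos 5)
Your proof is correct and follows essentially the same route as the paper: apply Lemma~\ref{lem:modbounddom} with the divergence-type dominating factor $H(t)=\gamma t$, compute that the resulting lower bound for $\mo f(\es(x_0;r_0e^{-m},r_0))$ tends to $+\infty$ as $m\to\infty$, and conclude via an extension criterion. The only (immaterial) difference is the last step, where the paper cites the half-space criterion Lemma~\ref{lem:exten} directly, while you transfer to $\mathbb B^n$ by a M\"obius map and invoke Proposition~\ref{prop:lim}; both are valid, and yours is if anything the more precise tool for extension at a single boundary point.
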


\begin{proof}
The assumption of Theorem~\ref{thm:domfac} yields that $f$ satisfies the conditions of Lem\-ma~\ref{lem:modbounddom} with a dominating factor of divergence type $H(t)=\gamma t.$ Then, by (\ref{eq:modintbound})
\begin{equation*}
\mo f(\es(x_0;r_0e^{-m},r_0))\,\ge\,\int\limits^m_{1/n}\frac{dt}{\left[H^{-1}\left(nt+\sigma\right)\right]^{1/(n-1)}}\,,
\end{equation*}
where $\sigma=\log\frac{2nM}{\omega_{n-1}r_0^n}.$ A straightforward calculation gives
\begin{equation}\label{eq:modintbound1}
\begin{split}
\mo f(\es(x_0;r_0e^{-m},r_0))&\ge\gamma^{1/(n-1)}\int\limits^m_{1/n}\left(nt+\sigma\right)^{1/(1-n)}\,dt
\\ &=
\begin{cases}
C_1\left[(nm+\sigma)^\mu-(1+\sigma)^\mu\right], & n\ge 3, \\
C_2\log\frac{nm+\sigma}{1+\sigma}, & n=2,
\end{cases}
\end{split}
\end{equation}
where $C_1=\frac{(n-1)\gamma^{1/(n-1)}}{n(n-2)},$ $C_2=\frac{\gamma^{1/(n-1)}}{n}$ and $\mu=\frac{n-2}{n-1}.$ Letting $m\to\infty,$ we have
\begin{equation*}
\lim\limits_{m\to\infty} \mo f(\es(x_0;r_0e^{-m},r_0))\,=\,+\infty
\end{equation*}
in both cases, therefore, the desired assertion follows from Lemma~\ref{lem:exten}.
\end{proof}

\medskip
Let $x_0\in\partial\mathbb H^n$ and $x_1\in\mathbb H^n$ be two arbitrary points. Denote $|x_1-x_0|=r_0e^{-m},$ where $m>0$ and $r_0>0$ can be precisely defined later. Consider the  semiring $\es=\es(x_0;r_0e^{-m},r_0)$ and an orientation-preserving homeomorphism $f:\mathbb H^n\to\mathbb H^n.$ Then $f(\es)$ is a properly embedded semiring in $\mathbb H^n.$ Assume, in addition, that $\mo f(\es)>A_n.$ Denoting by $V_0$ and $V_1$ the connected components of $\mathbb H^n\setminus f(\es)$ ($\infty\in V_1$), Theorem~\ref{thm:estim} implies
\begin{equation}\label{eq:estim1}
\sup\limits_{y\in V_0}|y-f(x_0)|\,\le\, C {\rm dist}\,(f(x_0),V_1)e^{-\mo f(\es)}\,,
\end{equation}
where $C=\exp(A_n).$

\begin{thm}\label{thm:modcont}
Let $f:\mathbb H^n\to \mathbb H^n$ be a homeomorphism satisfying the assumptions of Theorem~\ref{thm:domfac} and $x_0\in \partial \mathbb H^n.$ Then for any $x_1\in \mathbb H^n,$
the following modulus of continuity estimates
\begin{equation}\label{eq:modcont2}
|f(x_1)-f(x_0)|\,\le\,\alpha\left[\log\frac{r_0}{|x_1-x_0|}\right]^{-C_2}\,,\qquad \text{if}\quad n=2\,,
\end{equation}
\begin{equation}\label{eq:modcont3}
\log|f(x_1)-f(x_0)|\,\le\,-\beta\left[\log\frac{r_0}{|x_1-x_0|}\right]^\mu+\delta\,,\qquad \text{if}\quad n\ge 3\,,
\end{equation}
hold, where $\alpha$ and $\delta$ are constants depending on ${\rm dist}\,(f(x_0),V_1)$ and $A_n,$ $\mu=(n-2)/(n-1),$ $\beta=C_1 n^\mu,$  and $C_1$ and $C_2$ are defined in (\ref{eq:modintbound1}).
\end{thm}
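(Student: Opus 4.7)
The plan is to exploit the setup introduced in the paragraph immediately preceding the statement. For $x_1\in\mathbb H^n$ sufficiently close to $x_0$ (so that $|x_1-x_0|<r_0$), I set $m=\log(r_0/|x_1-x_0|)$ and consider the semiring $\es=\es(x_0;r_0e^{-m},r_0)$. Since $f$ is a self-homeomorphism of $\mathbb H^n$, $f(\es)$ is properly embedded in $\mathbb H^n$, and $f(x_1)$ lies in the closure of the bounded component $V_0$ of $\mathbb H^n\setminus f(\es)$. The first task is to verify that $\mo f(\es)>A_n$ for all $m$ beyond some threshold: this follows because the integrability hypothesis on $e^{\gamma D_f(\cdot,x_0)}$ together with Lemma~\ref{lem:modbounddom} gives the lower bound (\ref{eq:modintbound1}) derived in the proof of Theorem~\ref{thm:domfac}, and the right-hand side tends to $+\infty$ as $m\to+\infty$ in both the $n=2$ and $n\ge 3$ cases.

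With $\mo f(\es)>A_n$ secured, Theorem~\ref{thm:estim} applies to the properly embedded semiring $f(\es)$, yielding (\ref{eq:estim1}):
\begin{equation*}
|f(x_1)-f(x_0)|\,\le\, e^{A_n}\dist(f(x_0),V_1)\,e^{-\mo f(\es)}.
\end{equation*}
The two claimed estimates then arise by substituting the two cases of (\ref{eq:modintbound1}) into the exponent. For $n=2$ this produces
$e^{A_n}\dist(f(x_0),V_1)\bigl((1+\sigma)/(2m+\sigma)\bigr)^{C_2}$ with $\sigma=\log(2nM/(\omega_{n-1}r_0^n))$; for $\sigma\ge 0$ the trivial bound $(1+\sigma)/(2m+\sigma)\le(1+\sigma)/(2m)$ then reduces this to $\alpha m^{-C_2}=\alpha[\log(r_0/|x_1-x_0|)]^{-C_2}$, which is (\ref{eq:modcont2}) with $\alpha=e^{A_n}\dist(f(x_0),V_1)((1+\sigma)/2)^{C_2}$. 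For $n\ge 3$, taking logarithms and substituting the second case of (\ref{eq:modintbound1}) gives
\begin{equation*}
\log|f(x_1)-f(x_0)|\,\le\, A_n+\log\dist(f(x_0),V_1)+C_1(1+\sigma)^\mu-C_1(nm+\sigma)^\mu,
\end{equation*}
and monotonicity of $t\mapsto t^\mu$ (with $\sigma\ge 0$) supplies $(nm+\sigma)^\mu\ge n^\mu m^\mu$, isolating the leading term $-C_1 n^\mu m^\mu=-\beta m^\mu$ and absorbing the remaining constants into $\delta$; recalling $m=\log(r_0/|x_1-x_0|)$ then yields (\ref{eq:modcont3}).

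The main obstacle is really one of bookkeeping rather than of substance: one must confirm that the smallness condition $\sigma\ge 0$ (equivalently $r_0^n\le 2nM/\omega_{n-1}$) can be imposed without loss of generality, since the hypothesis of Theorem~\ref{thm:domfac} is monotone in $r_0$ and $\sigma\to+\infty$ as $r_0\to 0^+$; one should also handle the minor technicality that $f(x_1)$ lies on $\partial V_0$ rather than strictly inside, either by taking closures in (\ref{eq:estim1}) or by shrinking the inner radius of $\es$ by an infinitesimal amount. Beyond these routine checks there is no further conceptual content: the proof amounts to combining the separation estimate (\ref{eq:estim1}) with the divergence-type modulus lower bound (\ref{eq:modintbound1}) and performing straightforward asymptotic simplification.
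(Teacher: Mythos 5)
Your proposal is correct and follows essentially the same route as the paper: fix the semiring $\es(x_0;r_0e^{-m},r_0)$ with $m=\log(r_0/|x_1-x_0|)$, use the divergence-type lower bound (\ref{eq:modintbound1}) to guarantee $\mo f(\es)>A_n$ for large $m$ and then to control the exponent in the separation estimate (\ref{eq:estim1}), and simplify asymptotically. Your treatment is, if anything, slightly more careful than the paper's on the bookkeeping points (the sign of $\sigma$ and the fact that $f(x_1)$ lies only in $\overline{V_0}$), which the paper passes over silently.
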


\begin{proof}
Let $x_0\in \partial \mathbb H^n$ and $x_1\in \mathbb H^n$ be two arbitrary points. Denote $|x_1-x_0|=r_0e^{-m},$ where the constant $m>0$ can be chosen in the following way.
By Theorem~\ref{thm:domfac},
\begin{equation*}
\mo f(\es)\,\ge\,\gamma^{1/(n-1)}\int\limits^m_{1/n}\left(nt+\sigma\right)^{1/(1-n)}\,dt
\end{equation*}
for the dominating factor of divergence type $H(t)=\gamma t.$ Since the integral in the right-hand side tends to $\infty$ as $m\to\infty,$ one can choose $m>0$ such that $\mo f(\es)>A_n.$ Fix such $m$ and $r_0=|x_1-x_0|e^m.$ The above lower bound by $A_n$ for $\mo f(\es)$ yields (\ref{eq:estim1}).

Clearly, $|f(x_1)-f(x_0)|\le\sup_{y\in V_0}|y-f(x_0)|,$ therefore, using (\ref{eq:modintbound1}) together with (\ref{eq:estim1}) for $n=2$ provides
\begin{equation*}
|f(x_1)-f(x_0)|\,\le\, C {\rm dist}\,(f(x_0),V_1)e^{-C_2\log\frac{nm+\sigma}{1+\sigma}}\,.
\end{equation*}
By a simple chain of upper bounds, we obtain
\begin{equation*}
|f(x_1)-f(x_0)|\,\le\,\,\widetilde{C}(nm+\sigma)^{-C_2}\,\le
\,\alpha m^{-C_2}\,=\,\alpha\left[\log\frac{r_0}{|x_1-x_0|}\right]^{-C_2}\,,
\end{equation*}
where $\alpha={\rm dist}\,(f(x_0),V_1)\exp(A_n-C_2\log{n}) .$

For higher dimensions, i.e. $n\ge 3,$
\begin{equation*}
\begin{split}
|f(x_1)-f(x_0)|\,&\le\,\widetilde{C} e^{-C_1\left[(nm+\sigma)^\mu-(1+\sigma)^\mu\right]}\,\le\,\widehat{C} e^{-C_1(nm+\sigma)^\mu}\\
&\le\,\widehat{C} e^{-C_1(nm)^\mu}\,=\,\widehat{C}e^{-\beta m^\mu}\,,
\end{split}
\end{equation*}
where $\mu=(n-2)/(n-1)$ and $\beta=C_1 n^\mu.$ Finally, by taking $\log$,
\begin{equation*}
\log|f(x_1)-f(x_0)|\,\le\,-\beta\left[\log\frac{r_0}{|x_1-x_0|}\right]^\mu+\delta\,,
\end{equation*}
where $\delta=A_n+C_1(1+\sigma)^\mu+\log {\rm dist}\,(f(x_0),V_1).$ This completes the proof.
\end{proof}


\subsection{Behavior at infinity}
The following result shows that under an appropriate condition on asymptotic behavior of the integral of a Teichm\"uller-Wittich-Belinski\u\i\ type the mappings admit continuous extensions to infinity; cf. \cite[Lemma~1.4]{GSS13}.

\begin{lem}\label{lem:infty}
Let $f:\mathbb H^n\to \mathbb H^n$ be a homeomorphism satisfying the assumptions of Theorem~\ref{thm:main} and
\begin{equation}\label{eq:infty}
\lim\limits_{R\to\infty}\frac{1}{\left(\log R\right)^2}\int\limits_{\es(0;r_0,R)}\frac{D_f(x,0)-1}{|x|^n}\, dm_n(x)\,=\,0
\end{equation}
for some $r_0>0.$ Then $f$ extends continuously to infinity.
\end{lem}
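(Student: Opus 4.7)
The plan is to establish that $\mo f(\es_R)\to+\infty$ as $R\to+\infty$, where $\es_R:=\es(0;r_0,R)$, and then to deduce the continuous extension of $f$ at $\infty$ by composing with the M\"obius inversion $J(x)=x/|x|^2$, which is a self-map of $\mathbb H^n$ swapping $0\in\partial\mathbb H^n$ with $\infty$. The one-point continuous extension at $0$ for $g:=f\circ J$ will translate back into the desired extension of $f$; that extension is furnished by Proposition~\ref{prop:lim} after conjugation with a Cayley transform $\mathbb H^n\to\B^n$, exactly as at the end of the proof of Theorem~\ref{thm:main}.

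For the modulus bound I would apply the first inequality of \eqref{eq1est} with $x_0=0$,
$$
\mo f(\es_R)\,\ge\,\log(R/r_0)\,P_R^{-1/(n-1)},\qquad
P_R\,:=\,\frac{2}{\omega_{n-1}\log(R/r_0)}\int_{\es_R}\frac{D_f(x,0)}{|x|^n}\,dm_n(x),
$$
and decompose $D_f=(D_f-1)+1$. Using the polar-coordinate identity $\int_{\es_R}|x|^{-n}dm_n(x)=(\omega_{n-1}/2)\log(R/r_0)$ recorded in the remark after Lemma~\ref{lem:modest}, this rewrites as
$$
P_R\,=\,1\,+\,\frac{2}{\omega_{n-1}\log(R/r_0)}\int_{\es_R}\frac{D_f(x,0)-1}{|x|^n}\,dm_n(x).
$$
Hypothesis \eqref{eq:infty} says that the integral on the right is $o((\log R)^2)$, so the added term is $o(\log R)$ as $R\to+\infty$.

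Now if $P_R\le 1$ then $P_R^{-1/(n-1)}\ge 1$ and $\mo f(\es_R)\ge\log(R/r_0)\to+\infty$ for free. Otherwise, given any $\varepsilon>0$, for all sufficiently large $R$ one has $P_R\le 1+\varepsilon\log R$, whence
$$
\mo f(\es_R)\,\ge\,\frac{\log(R/r_0)}{(1+\varepsilon\log R)^{1/(n-1)}}.
$$
For $n\ge 3$ this quantity is asymptotically $(\log R)^{(n-2)/(n-1)}\varepsilon^{-1/(n-1)}\to+\infty$. The borderline $n=2$ only gives a limit of $1/\varepsilon$, but since $\varepsilon$ is arbitrary this still forces $\liminf_{R\to+\infty}\mo f(\es_R)=+\infty$. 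This is where the main technical subtlety of the proof sits: the $(\log R)^2$-scale built into \eqref{eq:infty} is matched exactly by the exponent $1/(n-1)$ coming from \eqref{eq1est}, and the argument closes at $n=2$ only through the $\varepsilon$-quantifier afforded by the $o$-condition; using \eqref{eq2est} directly instead would be insufficient, since $\log(R/r_0)-o((\log R)^2)$ need not diverge.

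Once $\mo f(\es_R)\to+\infty$ is established, the inversion $J$ gives $J(\es(0;1/R,1/r_0))=\es_R$, so $g=f\circ J:\mathbb H^n\to\mathbb H^n$ satisfies $\mo g(\es(0;1/R,1/r_0))=\mo f(\es_R)\to+\infty$ as $1/R\to 0^+$. Conjugating by a M\"obius identification $\mathbb H^n\to\B^n$ that sends $0\in\partial\mathbb H^n$ to a boundary point of $\B^n$ (moduli are preserved, and the shrinking hemispherical semirings correspond to Apollonian semirings of the type used in Proposition~\ref{prop:lim}), Proposition~\ref{prop:lim} delivers continuous extension of the ball conjugate of $g$ at that boundary point, which pulls back to continuous extension of $f$ at $\infty$.
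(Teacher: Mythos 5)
Your proposal is correct and follows essentially the same route as the paper: the first inequality of \eqref{eq1est} combined with the identity $P_f(0;r_0,R)-1=\tfrac{2}{\omega_{n-1}\log(R/r_0)}\int_{\es}(D_f(x,0)-1)|x|^{-n}\,dm_n(x)$, the hypothesis \eqref{eq:infty} giving $P_f=o(\log R)$, hence $\mo f(\es(0;r_0,R))\to+\infty$, and then the extension criterion (Lemma~\ref{lem:exten}, which you realize via inversion and Proposition~\ref{prop:lim}). Your write-up merely supplies details the paper leaves implicit, notably the $\varepsilon$-argument needed to close the borderline case $n=2$.
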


\begin{proof}
In the proof of Theorem~\ref{thm:mainHol} we showed that
\begin{equation*}
P_f(0;r_0,R)-1\,=\,\frac{2}{\omega_{n-1}\log(R/r_0)}\int\limits_{\es(t;r,R)}\frac{D_f(x,0)-1}{|x|^n}\,dm_n(x).
\end{equation*}
Thus, due to (\ref{eq:infty}), $P_f(0;r_0,R)=o(\log R)$ as $R\to +\infty,$ and the desired assertion follows from (\ref{eq1est}) as $R\to +\infty$ and then by Lemma~\ref{lem:exten}.
\end{proof}







\bigskip
\noindent
{\bf Acknowledgements.}
The memory of Professor Lawrence (Larry) Zalcman, a distinguished mathematician, an outstanding editor-in-chief for 30 years of the brilliant analysis journal ``Journal d'Analyse Math\'ematique'', an attentive colleague, an extremely gorgeous person and friend, will live in the hearts of his many friends.


\bigskip
\noindent
{\bf Declarations.}

Conflict of Interest: None.

Ethical Approval: Not applicable.

Data Availability: No datasets were generated or analyzed during this study.


\end{document}